\definecolor{mylinkcolor}{rgb}{0.0,0.0,0.65}
\definecolor{mycitecolor}{rgb}{0.0,0.0,0.65}
\definecolor{myurlcolor}{rgb}{0.0,0.0,0.65}
\tikzstyle arrowstyle=[scale=1]
\DeclareMathAlphabet{\mathfr}{U}{euf}{m}{n}
\newtheorem{theorem}{Theorem}[section]
\newtheorem*{theorem*}{Theorem}
\newtheorem{proposition}[theorem]{Proposition}
\newtheorem{corollary}[theorem]{Corollary}
\newtheorem{lemma}[theorem]{Lemma}
\newtheorem{question}[theorem]{Question}
\theoremstyle{remark}
\newtheorem{definition}[theorem]{Definition}
\newtheorem{example}[theorem]{Example}
\definecolor{myblue}{rgb}{.2,.6,.75}
\definecolor{mygreen}{rgb}{.4,.7,.4}
\newcommand{\Q}{\mathbb Q}
\newcommand{\Gal}{\mathrm{Gal}}
\newcommand{\Z}{\mathbb Z}
\newcommand{\F}{\mathbb F}
\newcommand{\End}{\operatorname{End}}
\newcommand{\Hom}{\operatorname{Hom}}
\newcommand{\Aut}{\operatorname{Aut}}
\newcommand{\Br}{\mathrm{Br}}
\newcommand{\Res}{\operatorname{Res}}
\newcommand{\Tra}{\operatorname{Tra}}
\newcommand{\Image}{\mathrm{Im}}
\newcommand{\Kernel}{\mathrm{Ker}}
\newcommand{\norm}[1]{{\mathcal{N}_{#1}}}
\numberwithin{equation}{section}
\newcommand*{\stgroup}[2][]{\href{https://www.lmfdb.org/SatoTateGroup/#2}{{\ifx&#1& #2 \else #1 \fi}}}
\begin{document}
\title[]{Geometrically simple counterexamples to a local-global principle for quadratic twists}

\author{Emiliano Ambrosi}

\address{Institut de Recherche Math\'ematique Avanc\'ee (IRMA), Universit\'e de Strasbourg, 7 Rue Ren\'e Descartes, 67000 Strasbourg}
\email{eambrosi@unistra.fr}
\urladdr{http://emiliano.ambrosi.perso.math.cnrs.fr/}
\author{Nirvana Coppola}

\address{Dipartimento di Matematica ``Tullio Levi-Civita'', Università di Padova, Via Trieste 63, 35131 Padova}
\email{ncoppola@math.unipd.it}
\urladdr{https://sites.google.com/view/nirvanacoppola/home}
\author{Francesc Fit\'e}

\address{Departament de matem\`atiques i inform\`atica and Centre de recerca matem\`atica,
Universitat de Barcelona,
Gran via de les Corts Catalanes 585, 08007 Barcelona}
\email{ffite@ub.edu}
\urladdr{http://www.ub.edu/nt/ffite/}

\begin{abstract}
Two abelian varieties $A$ and $B$ over a  number field $K$ are said to be strongly locally quadratic twists if they are quadratic twists at every completion of $K$. While it was known that this does not imply that $A$ and $B$ are quadratic twists over $K$, the only known counterexamples (necessarily of dimension $\geq 4$) are not geometrically simple. We show that, for every prime $p\equiv 13 \pmod{24}$, there exists a pair of geometrically simple abelian varieties of dimension $p-1$ over $\Q$ that are strongly locally quadratic twists but not quadratic twists. The proof is based on Galois cohomology computations and class field theory.
\end{abstract}
\maketitle
\tableofcontents
\section{Introduction}
\subsection{Twists and local twists}
Let $K$ be a number field, write $\Gamma_K$ for its absolute Galois group, denote by $\Sigma_K$ the set of finite places of $K$, and for $v\in \Sigma_K$ write $K_v$ for the corresponding completion and $K(v)$ for the residue field. If $n\in \mathbb N$, we denote with $\zeta_n$  a primitive $n^{th}$-root of unity.

Let $A$ and $B$ be abelian varieties defined over a number field $K$. A celebrated theorem of Faltings \cite{Fal83} shows that if the reductions of $A$ and $B$ are isogenous over $K(v)$ for a density one set of $v\in \Sigma_K$, then $A$ and $B$ are isogenous over $K$.

Various variants of this result have been then studied (see e.g. \cite{CT22, Fit24, FP24, KL20, Raj98, Ram00}). In particular, one can show that if the reductions of $A$ and $B$ are isogenous over $\overline{K(v)}$ for a density one set of $v\in \Sigma_K$, then $A$ and $B$ are isogenous over $\overline K$ (see for example \cite{KL20,CT22}).

We will work in the category of abelian varieties up to isogeny. In particular, we will say that $A$ and $B$ are \emph{twists} if there exists a finite Galois extension $F$ of $K$ such that the base changes $A_F$ and $B_F$ are isogenous. The result in the previous paragraph naturally raises the question of whether the nature of a twist of $A$ is determined by that of a density one set of its reductions. Different incarnations of this problem have been studied (e.g. \cite{Fit24,FP24}). In this paper we continue this study, focusing on the situation of quadratic twists.

\subsection{Quadratic twists and locally quadratic twists}

Let us write $\Aut(A_{\overline K})$ to denote $(\End(A_{\overline K})\otimes \Q)^\times$. Recall that the set of twists of $A$ is in a canonical bijection with the Galois cohomology group $H^1(\Gamma_K, \Aut(A_{\overline K}))$, where $\Gamma_K$ is the absolute Galois group of $K$. For $\alpha\in  H^1(\Gamma_K, \Aut(A_{\overline K}))$, write $A_{\alpha}$ for the corresponding twist of $A$. We say that $A_{\alpha}$ is a \emph{quadratic twist} of $A$ if $\alpha$ is in the image of $H^1(\Gamma_K, \{\pm 1 \})\rightarrow H^1(\Gamma_K, \Aut(A_{\overline K}))$, i.e. if $\alpha$ is the image of a continuous character $\chi : \Gamma_K\rightarrow \{\pm 1\}$.

We say that $A$ and $B$ are \emph{locally quadratic twists} if their reductions are quadratic twists over $K(v)$ for a density one set of $v\in \Sigma_K$. So, in the spirit of the results of \cite{KL20,CT22}, one would like an answer to the following question:
\begin{question}\label{qu : mainquestion}
	If $A$ and $B$ are locally quadratic twists, are they quadratic twists?
\end{question}
If $\End(A_{\overline K})=\Z$, the above question admits a positive answer (see \cite{Fit24}). While it is known that the answer is negative in general (see Section \ref{sec: GWintro} for more details), only non geometrically simple counterexamples (of dimensions 4 and 6) were known prior to the present work (see \cite[Rem. 4.10, \S6.2]{Fit24}). The main result of this paper is a strong negative answer to Question \ref{qu : mainquestion} for geometrically simple abelian varieties of arbitrarily big dimension.
\begin{theorem}\label{thm : mainlocal}
Fix a prime $p\equiv 13 \pmod {24}$. Every geometrically simple abelian variety of dimension $p-1$ over $\Q$ such that $A_{\overline \Q}$ has complex multiplication by $\Q(\zeta_{3p})$ has a twist which is locally quadratic but not quadratic.
\end{theorem}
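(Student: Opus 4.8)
The plan is to translate the existence of a locally-quadratic-but-not-quadratic twist into a statement about Galois cohomology and then solve it via class field theory. Fix $p \equiv 13 \pmod{24}$, let $L = \Q(\zeta_{3p})$, and let $A/\Q$ be geometrically simple with $\End(A_{\overline{\Q}}) \otimes \Q \cong L$ (so $\dim A = p-1 = [L:\Q]/2$ forces the CM type to be primitive and $A$ is of the expected dimension). Since $\Aut(A_{\overline{\Q}}) = L^\times$ with the Galois action factoring through $\Gal(L/\Q) = (\Z/3p\Z)^\times \cong \Z/2 \times \Z/(p-1)$, a twist of $A$ is an element of $H^1(\Gamma_\Q, L^\times)$, which by inflation-restriction and Hilbert 90 is computed by $H^1(\Gal(L/\Q), L^\times)$-type data together with local information. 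The first step is to identify the subgroup of \emph{quadratic} twists, namely the image of $H^1(\Gamma_\Q, \{\pm 1\}) = \Q^\times/(\Q^\times)^2$, inside the group of all twists, and to understand when a class in $H^1(\Gamma_\Q, L^\times)$ is everywhere locally in the image of a quadratic character but not globally.

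The key reduction, which I would carry out next, is that $A_\alpha$ and $A_\beta$ are quadratic twists of each other over $K(v)$ precisely when the ratio $\alpha/\beta$, viewed in $H^1$, becomes a quadratic-character class after restriction to $\Gamma_{\Q_v}$; and by a Faltings/Chebotarev argument (in the spirit of the cited \cite{Fit24, KL20, CT22}) the density-one condition can be upgraded to an \emph{every place} condition. Concretely, one wants a class $c \in H^1(\Gamma_\Q, L^\times)$ whose image lies in the ``quadratic'' part locally at every place $v$ but which is not globally the image of a character $\Gamma_\Q \to \{\pm 1\}$. Dividing out by the genuinely quadratic part, this becomes: find a nontrivial element of a suitable Tate–Shafarevich-type group $\Sha(\Q, M)$ where $M$ is the cokernel (or an appropriate quotient) of $\{\pm 1\} \hookrightarrow L^\times$ with its $\Gamma_\Q$-action, i.e. a class that is locally trivial everywhere but globally nontrivial. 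Here is where the congruence $p \equiv 13 \pmod{24}$ should enter: it controls the splitting of $2$ and $3$ in $L$, the structure of $(\Z/3p\Z)^\times$, and the relevant local conditions, so that a Poitou–Tate / global Euler characteristic computation produces a nonzero $\Sha$.

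The main obstacle — and the technical heart — will be the explicit class field theory computation showing that this $\Sha$ is nonzero for the chosen congruence class of $p$. Concretely one must: (i) pin down the Galois module $M$ and its decomposition into isotypic pieces under $\Gal(L/\Q) \cong C_2 \times C_{p-1}$; (ii) for the piece that could support a locally-trivial-globally-nontrivial class, relate it (via Shapiro's lemma and the structure of $L$ as an induced or coinduced module) to the $2$-torsion of a ray class group or to $\Br$-type obstructions over the real quadratic or the degree-$(p-1)$ subfields of $L$; and (iii) verify the numerical conditions that make the local conditions at the ramified and archimedean places compatible with a nonzero global class, which is exactly what $p \equiv 13 \pmod {24}$ (equivalently $p \equiv 1 \pmod{12}$ together with $p \equiv 2 \pmod 3$ and a congruence mod $8$) is engineered to guarantee. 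Finally, I would check that the twist so produced can be realized by an honest abelian variety over $\Q$ — but since every class in $H^1(\Gamma_\Q, \Aut(A_{\overline\Q}))$ gives a twist of the fixed $A$, and $A$ exists by CM theory, this last point is automatic once the cohomology class is constructed.
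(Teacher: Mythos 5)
Your high-level framing agrees with the paper's: translate the problem into Galois cohomology for the module $E^\times/\{\pm 1\}$ (with $E = \Q(\zeta_{3p})$), characterise the desired twist as a class that is locally in the image of a quadratic character everywhere but globally is not, and then invoke class field theory. This is essentially the content of the paper's Proposition~\ref{prop : counterexample_conditions}, which you reinvent in $\Sha$-flavoured language. One small inaccuracy in that reformulation: the paper's conditions are not a pure $\Sha^1(\Q, E^\times/\{\pm 1\})$ condition but a hybrid — condition~(ii) asks for triviality on all cyclic subgroups of $G = \Gal(E/\Q)$ (which, via decomposition groups, controls local triviality at unramified places), while condition~(iii) separately asks that the obstruction in $H^2(\Gamma_\Q, \{\pm 1\})$ vanishes, so that the class actually lifts to $H^1(\Gamma_\Q, E^\times)$ and defines an honest twist. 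These two conditions then combine, via the Brauer-group sum formula, to give local triviality at the remaining (ramified) places.

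The genuine gap is that your proposal stops exactly where the proof begins. You correctly identify that one must exhibit a nontrivial class and verify local triviality plus liftability, and you call this ``the technical heart,'' but you leave it as a sketch: isotypic decomposition under $C_2 \times C_{p-1}$, Shapiro's lemma, ray class groups, ``Poitou--Tate / global Euler characteristic.'' None of this is carried out, and the numerology is not pinned down. The paper's actual construction is concrete and quite different from what you outline: since $p \equiv 1 \pmod 3$ one can write $a^2 + 3b^2 = 3p$, and the element $y = (a + b\sqrt{-3})/\sqrt{-3p} \in (E^\times/\{\pm 1\})^{G_1}$ (where $G_1 = \Gal(E/\Q(\sqrt{-3}))$) satisfies $\sigma(y)y = -1$, giving an explicit cocycle whose inflation to $H^1(G, E^\times/\{\pm 1\})$ is then shown to satisfy (i), (ii), (iii). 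Verifying (ii) requires passing to the odd-degree subextension $k$ (where $\Gal(E/k) \cong \Z/2 \times \Z/4$, using $p \not\equiv 1 \pmod 8$) and checking the four maximally cyclic subgroups by hand; verifying (iii) uses the exact sequence for $\Br(\Q)[2]$ and the fact that all decomposition groups except possibly at $3$ are cyclic (using $p \equiv 1 \pmod 4$ and quadratic reciprocity to show $D_p$ is cyclic). Without producing the element and doing these checks, there is no proof. Separately, your parenthetical decomposition of the congruence is wrong: $p \equiv 13 \pmod{24}$ means $p \equiv 1 \pmod 3$ and $p \equiv 5 \pmod 8$, not $p \equiv 2 \pmod 3$; the splitting of $p$ in $\Q(\sqrt{-3})$ (which requires $p \equiv 1 \pmod 3$) is what makes the whole construction go.
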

Such abelian varieties exist by \cite[Theorem 3.0.1]{GGL24}.
\subsection{The Grunwald-Wang counterexample}\label{sec: GWintro}
Question \ref{qu : mainquestion} has already been studied by the third named author in \cite{Fit24}. There, it is proven that it has a positive answer if $\dim(A)\leq 3$, and a counterexample is given in dimension 4. To motivate the strategy for the proof of Theorem \ref{thm : mainlocal}, let us recall this counterexample. It consists of the pair of abelian fourfolds $A$ and $B$, which are the Jacobians of the genus 4 curves over $\Q$ given by the affine models
$$
C:y^ 2 = x^9+x, \qquad C':y^2=x^ 9 +16 x .
$$
The curves $C$ and $C'$ were found via a computer search. The proof given in \cite{Fit24} that $A$ and $B$ are locally quadratic twists involved the explicit computation of the Weil polynomials of $A$ and $B$ via Jacobi sums. The proof that they are not quadratic twists combined the fact that the minimal extension over which all homomorphisms between $A$ and $B$ are defined is $\Q(\zeta_{16}, \sqrt[8]{16})$ with the fact that $A$ and $B$ are not quadratic twists over any of the three quadratic subfields of this extension. This required the computation of Frobenius traces at prescribed primes.

As pointed out to us by Alex Smith, the Grunwald--Wang theorem \cite[Chap. X]{AT68} suggests a more conceptual proof: on the one hand, using that 16 admits an $8^{th}$-root $\alpha_p$ modulo every odd prime $p$, one can build the isomorphism $\phi_{\alpha_p}: (x,y)\mapsto (\alpha_p x , \alpha^{9/2}_py)$ over $\overline{\mathbb F}_p$ between the reductions of $C$ and $C'$, showing that $A$ and $B$ are quadratic twists modulo every odd prime; on the other hand, exploiting the fact that 16 does not admit an $8^{th}$-root in $\Q$, one can show that $A$ and $B$ are not quadratic twists as explained in Section \ref{sec : GW}.

As mentioned before, we remark that $A$ and $B$ are not geometrically simple. This can be shown by observing that they have potential complex multiplication by $\mathbb Q(\zeta_{16})$ but non-primitive CM type.
\subsection{Strongly locally quadratic twists}
We observe that, by Hensel's lemma, $16$ has an $8^{th}$-root $\beta_p$ not only over $\F_p$, but also over $\Q_p$, for every odd prime $p$. Hence $A$ and $B$ have the stronger property of being quadratic twists over $\Q_p$ for all odd $p$.\footnote{One can also show that such $A$ and $B$ are quadratic twists over the completion at every place of $\Q(\sqrt{7})$, see Example \ref{ex : all places}} This leads to the following definition.
\begin{definition}
	Let $A$ and $B$ be two abelian varieties over $K$. We say that $A$ and $B$ are strongly locally quadratic twists if they are quadratic twists over $K_v$ for a density one set of places $v$ of $K$.
\end{definition}
Clearly, if $A$ and $B$ are strongly locally quadratic twists, then they are locally quadratic twists, but we do not know if the converse holds. With this definition, we can state the following stronger version of Theorem \ref{thm : mainlocal}.
\begin{theorem}\label{thm : mainstronglylocal}
Fix a prime $p\equiv 13 \pmod{24}$. Every geometrically simple abelian variety over $\Q$ of dimension $p-1$ such that $A_{\overline \Q}$ has complex multiplication by $\Q(\zeta_{3p})$ has a twist which is strongly locally quadratic but not quadratic.
\end{theorem}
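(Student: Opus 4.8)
The plan is to reduce \ref{thm : mainstronglylocal} to a single construction in Galois cohomology and class field theory depending only on $M:=\Q(\zeta_{3p})$. Since $A_{\Qbar}$ has complex multiplication by $M$, one has $\Aut(A_{\Qbar})=M^\times$ with its natural $\Gamma_\Q$-action (which factors through $G:=\Gal(M/\Q)\cong(\Z/3p\Z)^\times$), and likewise $\Aut(A_{\overline{\Q_v}})=M^\times$ as a $\Gamma_{\Q_v}$-module for every place $v$. Hence the set of twists of $A$, the subgroup $Q:=\operatorname{Im}\big(H^1(\Gamma_\Q,\{\pm1\})\to H^1(\Gamma_\Q,M^\times)\big)$ of quadratic twists, and the property of being strongly locally quadratic depend on $M$ alone and not on the particular $A$; a twist of $A$ is moreover geometrically simple, being $\Qbar$-isomorphic to $A$. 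So it suffices to produce one class $\alpha\in H^1(\Gamma_\Q,M^\times)$ with $\alpha\notin Q$ but $\operatorname{res}_v\alpha\in Q_v$ (the analogous local subgroup of $H^1(\Gamma_{\Q_v},M^\times)$) for a density one set of $v$; a geometrically simple $A$ to which $\alpha$ then applies exists by \cite[Theorem 3.0.1]{GGL24}.

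Next I set up the cohomological dictionary. Hilbert~90 gives $H^1(G,M^\times)=0$, so the inflation--restriction sequence identifies $H^1(\Gamma_\Q,M^\times)$ with the kernel of the transgression $\operatorname{Hom}(\Gamma_M,\mu(M))^G\to H^2(G,M^\times)=\Br(M/\Q)$, where $\mu(M)=\mu_{6p}$; a class $\alpha$ is recovered from $\psi:=\operatorname{res}_{\Gamma_M}\alpha$, and $\alpha\in Q$ if and only if the character $\psi$ of $\Gamma_M$ extends to a character of $\Gamma_\Q$ of order dividing $2$ (the identical description holds over each $\Q_v$, using Hilbert~90 for the cyclic extension cut out by the decomposition group). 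I will look for $\alpha$ with $\psi$ of order exactly $2$, namely the quadratic character attached by Kummer theory to some $d\in M^\times$ with $\sqrt d\notin M$, and put $N:=M(\sqrt d)$. Then $\alpha\notin Q$ is equivalent to $d\notin\Q^\times(M^\times)^2$, and $\operatorname{res}_v\alpha\in Q_v$ is equivalent to $d\in\Q_v^\times(M_w^\times)^2$ for $w\mid v$ a place of $M$.

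The choice of $d$ is forced by the arithmetic of $p$. As $p\equiv5\pmod8$, the $2$-part of $G$ is $\Z/2\times\Z/4$, the $\Z/2$ coming from $\Q(\zeta_3)=\Q(\sqrt{-3})$ and the $\Z/4$ from the $2$-part of $\Gal(\Q(\zeta_p)/\Q)$; I take $d$ so that $N/\Q$ is Galois with $\Gal(N/\Q)$ the central extension of $G$ by $\Gal(N/M)\cong\Z/2$ whose class is the inflation of the cup product $\chi_{-3}\cup\chi_p$, where $\chi_{-3}$ and $\chi_p$ are the quadratic characters of $\Q(\sqrt{-3})$ and of the quadratic subfield $\Q(\sqrt p)$ of $\Q(\zeta_p)$. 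Equivalently the $2$-Sylow of $\Gal(N/\Q)$ is $(\Z/2)^2\rtimes\Z/4$, the non-split central extension of $\Z/2\times\Z/4$ by $\Z/2$ in which lifts of the two cyclic generators commute up to the central involution. Such an $N$ exists: the obstruction to this (solvable, central) embedding problem is the inflation of $\chi_{-3}\cup\chi_p$ to $\Br(\Q)[2]$, i.e.\ the class of the quaternion algebra $(-3,p)_\Q$, and this is trivial --- at finite $q\notin\{2,3,p\}$ because all arguments are units, at $q=2$ because $-3\equiv1\pmod4$, at $\infty$ because $p>0$, and at $q=3$ and $q=p$ precisely because $p\equiv1\pmod3$. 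Imposing in addition that $\psi$ be locally trivial at the places above $2$, $3$ and $p$ (a finite set of local conditions, compatible with the vanishing global obstruction), one then verifies the two required properties. First, $\alpha\notin Q$, since $\Gal(N/\Q)$ does not split over $G$ (its $2$-Sylow being a non-split central extension), so $d\notin\Q^\times(M^\times)^2$. Second, for every $v$ unramified in $N$, either $v$ has odd residue degree in $M$, in which case $\Q_v^\times$ already surjects onto $M_w^\times/(M_w^\times)^2$ and $d\in\Q_v^\times(M_w^\times)^2$ automatically, or $v$ has even residue degree in $M$, in which case a direct computation in $(\Z/2)^2\rtimes\Z/4$ shows that $\operatorname{Frob}_v$ has the same order in $\Gal(N/\Q)$ as in $G$, so $\sqrt d\in M_w$ and $\psi$ is locally trivial; in either case $\operatorname{res}_v\alpha\in Q_v$. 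Since the bad $v$ form a density zero set, $\alpha$ is strongly locally quadratic but not quadratic, as desired.

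I expect the main obstacles to be two-fold. First, promoting the $\Gamma_M$-character $\psi$ to an honest class $\alpha\in H^1(\Gamma_\Q,M^\times)$: one must kill the transgression of $\psi$ in $H^2(G,M^\times)=\Br(M/\Q)$, which --- since this Brauer group embeds into $\bigoplus_v\Br(M_v/\Q_v)$ --- is a place-by-place verification, harmless wherever $\psi$ is locally trivial (the even-residue-degree places and, after the adjustment above, the places over $2$, $3$, $p$) and wherever the local $H^2$ has odd order (the odd-residue-degree places), but requiring the embedding problem to be solved with these local constraints in force. Second, the elementary but essential group-theoretic lemma that, among the central extensions of $\Z/2\times\Z/4$ by $\Z/2$, it is exactly the cup-product one $(\Z/2)^2\rtimes\Z/4$ for which the order-of-Frobenius argument above succeeds --- this is precisely where the non-cyclicity of the $2$-part of $\Gal(M/\Q)$, the Grunwald--Wang ``special case'' alluded to in the introduction, enters. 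Thus the hypothesis $p\equiv13\pmod{24}$ is exactly what is needed to guarantee simultaneously that such geometrically simple $A$ exist (via \cite{GGL24}), that $\Gal(M/\Q)$ has $2$-part exactly $\Z/2\times\Z/4$ (from $p\equiv5\bmod8$), and that the Brauer obstruction $(-3,p)_\Q$ vanishes (from $p\equiv1\bmod3$).
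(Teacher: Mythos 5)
Your proposal is sound and reaches the same conclusion by a genuinely different packaging of the cohomology. Where the paper first proves a clean reduction (Proposition~\ref{prop : counterexample_conditions}) to finding a class $x\in H^1(G,E^\times/\{\pm1\})$ that is nontrivial, vanishes on all cyclic subgroups, and has trivial image in $H^2(\Gamma_\Q,\{\pm1\})$, and then exhibits an \emph{explicit} element $y=\frac{a+b\sqrt{-3}}{\sqrt{-3p}}$ (with $a^2+3b^2=3p$, so $\sigma(y)y=-1$) for which the three conditions are verified by direct norm computations, you instead characterise the desired twist through the quadratic extension $N=M(\sqrt d)$ of $M=\Q(\zeta_{3p})$ it cuts out, pin down $\Gal(N/\Q)$ as the central extension of $G$ by $\Z/2$ with class $\chi_{-3}\cup\chi_p$, and invoke solvability of the embedding problem after checking the quaternion symbol $(-3,p)_\Q$. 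The two routes are related by the dictionary $\psi=\operatorname{res}_{\Gamma_M}\tilde x$, and the place-by-place verification of $(-3,p)_\Q=0$ is essentially the same quadratic-reciprocity input as the paper's treatment of condition (iii). What each approach buys: the paper's explicit $y$ sidesteps any appeal to embedding-problem solvability or to prescribing local conditions, and its Proposition~\ref{prop : counterexample_conditions} cleanly separates the abelian-variety content from the pure group cohomology; your version makes the Grunwald--Wang mechanism and the role of the cup product transparent, and the observation that $\Q_v^\times/(\Q_v^\times)^2\to M_w^\times/(M_w^\times)^2$ is onto at odd-residue-degree unramified places is a nice shortcut that the paper handles indirectly via Lemma~\ref{lem : reductionodddegree}.

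Two points that would need to be filled in to make this airtight. First, the assertion that ``a direct computation in $(\Z/2)^2\rtimes\Z/4$ shows that $\operatorname{Frob}_v$ has the same order in $\Gal(N/\Q)$ as in $G$'' is not automatic: it is equivalent to $\chi_{-3}\cup\chi_p$ restricting to $0$ on \emph{every} cyclic subgroup $\langle g\rangle\subseteq G$, and this holds only because when both $\chi_{-3}(g)$ and $\chi_p(g)$ are nontrivial the $2$-part of $g$ is forced to have order $4$ (here one uses $p\equiv 5\pmod 8$, i.e.\ that the $2$-Sylow of $\Gal(\Q(\zeta_p)/\Q)$ is $\Z/4$; for $p\equiv3\pmod4$ the cup of the quadratic character with itself on a $\Z/2$ would be nonzero and the argument collapses, matching Proposition~\ref{prop : minimality}). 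Second, you need $\alpha\in H^1(\Gamma_\Q,M^\times)$ lifting $\psi$, whose obstruction lives in $H^2(G,M^\times)=\Br(M/\Q)$ rather than merely in $\Br(\Q)[2]$; you flag this, but the place-by-place argument should be completed (odd-residue-degree places contribute odd-order local Brauer groups, even-residue-degree places are killed by the previous cup-product vanishing, and the finitely many places over $2,3,p$ require the local normalisation of $\psi$ that you impose, whose compatibility with a global solution should be justified, e.g.\ by surjectivity of $\Q^\times/(\Q^\times)^2\to\prod_{v\in S}\Q_v^\times/(\Q_v^\times)^2$ for finite $S$). With these two items made precise, your argument is a correct alternative to the paper's proof.
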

We also show (Proposition \ref{prop : minimality}) that $n=39$ is the minimal odd $n$ for which there exists a pair of abelian varieties over $\Q$ with potential complex multiplication by $\Q(\zeta_{n})$ which are strongly locally quadratic twists but not quadratic twists. Similar techniques can be used to show that a geometrically simple abelian variety over $\Q$ with potential complex multiplication by $\Q(\zeta_{20})$ could be twisted in order to obtain a counterexample in dimension 4, but we do not know if such abelian varieties exist. Note that our source \cite[Theorem 3.0.1]{GGL24} of geometrically simple abelian varieties over $\Q$ with potential complex complex multiplication by $\Q(\zeta_{n})$ requires $n$ odd or $n\equiv 2 \pmod 4$.
\subsection{Final remark}
We would like to end this introduction by stressing the importance of shifting from the notion of locally quadratic twist to that of strongly locally quadratic twist. While group representation techniques are well suited for the study of locally quadratic twists, the cohomological approach faces the difficulties of the composition of maps
$$
H^1(\Gamma_K,\Aut(A_{\overline K}))\rightarrow  H^1(\Gamma_{K_v},\Aut(A_{\overline K}))\rightarrow H^1(\Gamma_{K(v)},\Aut(A_{\overline{K(v)}})).
$$

Since the natural map $\Aut(A_{\overline K})\rightarrow \Aut(A_{\overline K_v})$ is an isomorphism (see \cite[Corollary 12.13]{EvdGM}), the study of strongly locally quadratic twists reduces to the study of the first and more accessible of the above composition of maps. 

In contrast, while the specialisation map $\Aut(A_{\overline K_v}) \rightarrow \Aut(A_{\overline{K(v)}})$ is injective (as follows from the injection in \cite[Theorem 12.10]{EvdGM}), it is rarely surjective. For example if $A$ is a non CM elliptic curve over $\Q$ or $\Q_p$, then $\Aut(A_{\overline \Q})= \Aut(A_{\overline \Q_p}) \simeq \Q^\times$, but $\Aut(A_{\overline{\F}_p})$ are the invertibles in either a quadratic field or a quaternion algebra, see \cite[Chapter V.3]{Sil86}). 

The adequacy of the cohomological tools for the study and explicit computation of the first map is what ultimately allowed the construction of the counterexamples presented in this article.

\textbf{Acknowledgements.} Fité thanks Université de Strasbourg for its warm hospitality during visits in May 2022, May 2023, and June 2024. Fité was financially supported by the Ramón y Cajal fellowship RYC-2019-027378-I, by the María de Maeztu Program CEX2020-001084-M, and by the AEI grant PID2022-137605NB-I00. Coppola was supported by the ANR-CYCLADES project at Université de Strasbourg. Coppola is a member of the INdAM group GNSAGA. Thanks to Jordan Ellenberg, Alex Smith, and Marco Streng for fruitful conversations. Thanks to Giuseppe Ancona for triggering the collaboration between the authors.
\section{Cohomological characterisation of strongly locally quadratic twists}
The main result of this section is Proposition  \ref{prop : counterexample_conditions}, that translates the study and the construction of strongly locally quadratic twists to a purely cohomological statement.
\subsection{Statements}\label{sec : statements}
Let $K$ be a number field and let $A/K$ be a geometrically simple abelian variety whose geometric endomorphism algebra $\End(A_{\overline K})\otimes \Q$ is a number field $E$.
In this section we give a cohomological characterisation (Proposition \ref{prop :  counterexample_conditions}) of the existence of twists of $A$ which are strongly locally quadratic but not (globally) quadratic.
Write $K\subseteq L$ for the minimal extension over which all the endomorphisms of $A$ are defined. It is a finite and Galois extension.
 Let $G$ be the Galois group of $L/K$ and consider the following commutative diagram with exact rows and columns.
	\begin{center}
	\begin{tikzcd}
	&& H^1(\Gamma_K, \{ \pm 1\}) \arrow{d}& \\
		&& H^1(\Gamma_K, E^{\times}) \arrow{d}& \\
		1 \arrow{r} & H^1(G, E^{\times}/\{\pm 1\}) \arrow{r} & H^1(\Gamma_K, E^{\times}/\{\pm 1 \}) \arrow{r} \arrow{d}{\delta} &H^1(\Gamma_L, E^{\times}/\{\pm 1\})\\
		&& H^2(\Gamma_K, \{ \pm 1\}) &
	\end{tikzcd}

\end{center}
 The vertical sequence is induced by the exact sequence of $G$-modules
 $$1\rightarrow \{\pm 1\}\rightarrow E^{\times}\rightarrow E^{\times}/\{\pm 1\}\rightarrow 1$$
 and the horizontal one by inflation and restriction.
 For an element $x\in H^1(G, E^{\times}/\{\pm 1\})$
 consider the following conditions:
 \begin{enumerate}[(i)]
 	\item $x\neq 1$;
 	\item $x$ restricts to $1$ in $H^1(C, E^{\times}/\{\pm 1\})$ for every cyclic subgroup $C\subseteq G$;
 	\item $x$ maps to $1$ in $H^2(\Gamma_K, \{\pm 1\})$.
 \end{enumerate}

\begin{proposition}\label{prop : counterexample_conditions}
	The following are equivalent:
\begin{enumerate}
	\item there exists a twist $B$ of $A$ which is strongly locally quadratic but not quadratic;
	\item there exists an element $x \in H^1(G, E^{\times}/\{\pm 1\})$ satisfying $(i)-(ii)-(iii)$ above.
\end{enumerate}
\end{proposition}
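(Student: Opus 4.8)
The plan is to set up a dictionary between twists of $A$ and Galois cohomology classes, translate ``strongly locally quadratic but not quadratic'' into a cohomological statement over $K$, and then read off conditions $(i)$--$(iii)$ by chasing the displayed commutative diagram.

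First I would record the basic dictionary. In the isogeny category $\Aut(A_{\overline K})=E^\times$, and a twist $B$ of $A$ corresponds to a class $\beta\in H^1(\Gamma_K,E^\times)$; write $\bar\beta$ for its image in $H^1(\Gamma_K,E^\times/\{\pm 1\})$. From the long exact sequence attached to $1\to\{\pm 1\}\to E^\times\to E^\times/\{\pm 1\}\to 1$ (the vertical sequence of the diagram, used over $K$ and over every $K_v$), $B$ is a quadratic twist of $A$ iff $\beta$ lies in the image of $H^1(\Gamma_K,\{\pm 1\})$ iff $\bar\beta=1$. Using the $\Gamma_{K_v}$-equivariant isomorphism $\Aut(A_{\overline K})\xrightarrow{\ \sim\ }\Aut(A_{\overline{K_v}})$ recalled in the excerpt, $B_{K_v}$ is the twist of $A_{K_v}$ by $\beta_v:=\Res_{\Gamma_{K_v}}\beta$, and $B_{K_v}$ is a quadratic twist of $A_{K_v}$ iff $\bar\beta_v:=\Res_{\Gamma_{K_v}}\bar\beta=1$ in $H^1(\Gamma_{K_v},E^\times/\{\pm 1\})$. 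Hence $B$ is a strongly locally quadratic twist of $A$ iff $\bar\beta_v=1$ for a density-one set of $v$.

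The heart of the argument is the following claim about a class $\gamma\in H^1(\Gamma_K,E^\times/\{\pm 1\})$: one has $\gamma_v=1$ for a density-one set of $v$ iff $\gamma$ is the inflation of a class $x\in H^1(G,E^\times/\{\pm 1\})$ that restricts to $1$ on every cyclic subgroup of $G$. For the ``only if'' direction I would argue in two steps. Since $\Gamma_L$ acts trivially on $E^\times/\{\pm 1\}$, $\Res_{\Gamma_L}\gamma$ is a continuous homomorphism $\Gamma_L\to E^\times/\{\pm 1\}$ cutting out a finite abelian extension $M/L$; any $v$ with $\gamma_v=1$ forces all places of $L$ above it to split completely in $M$, so a density-one set of places of $L$ splits completely in $M$, and Chebotarev gives $M=L$. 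Thus $\Res_{\Gamma_L}\gamma=1$, and by exactness of the top row of the diagram $\gamma=\Inf(x)$ for a (unique) $x\in H^1(G,E^\times/\{\pm 1\})$. Next, for $v$ unramified in $L/K$, inflation to $\Gamma_{K_v}$ factors through the decomposition group $G_v=\langle\Frob_v\rangle\subseteq G$, and $H^1(G_v,E^\times/\{\pm 1\})\hookrightarrow H^1(\Gamma_{K_v},E^\times/\{\pm 1\})$ is injective by inflation--restriction (again because $\Gamma_{L_w}$ acts trivially); so $\gamma_v=1$ iff $\Res_{G_v}x=1$. Since by Chebotarev every cyclic subgroup of $G$ equals $\langle\Frob_v\rangle$ for a positive-density set of $v$, the density-one hypothesis forces $x$ to restrict trivially to every cyclic subgroup. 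The ``if'' direction is the same computation run backwards: for every unramified $v$, $\gamma_v$ is the inflation of $\Res_{G_v}x=1$, so $\gamma_v=1$ on a density-one set.

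With the claim in hand the equivalence is pure diagram chasing. Given $B$ as in $(1)$: the claim gives $\bar\beta=\Inf(x)$ with $x$ satisfying $(ii)$; $B$ not quadratic means $\bar\beta\neq 1$, hence $x\neq 1$ by injectivity of inflation, i.e. $(i)$; and $\bar\beta$ lifting to $\beta\in H^1(\Gamma_K,E^\times)$ gives $\delta(\bar\beta)=1$ by exactness of the vertical sequence, i.e. $(iii)$. Conversely, given $x$ satisfying $(i)$--$(iii)$: by $(iii)$ and exactness $\Inf(x)$ lifts to some $\beta\in H^1(\Gamma_K,E^\times)$, and we set $B=A_\beta$; the claim shows $B$ is strongly locally quadratic, while $(i)$ and injectivity of inflation give $\bar\beta=\Inf(x)\neq 1$, so $B$ is not a quadratic twist of $A$. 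The main obstacle I expect is the ``only if'' half of the claim: passing cleanly from the arithmetic input (``$\gamma_v=1$ for density-one $v$'') to the finite-group condition $(ii)$ requires both Chebotarev applications together with the local inflation--restriction injectivity; everything else is formal, given the displayed diagram and the cited identification $\Aut(A_{\overline K})\cong\Aut(A_{\overline{K_v}})$.
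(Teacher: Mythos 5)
Your proof is correct and follows essentially the same route as the paper: the same cohomological dictionary, the same Chebotarev application to realize cyclic subgroups as decomposition groups, and the same inflation--restriction injectivity in the diagram. The only cosmetic difference is that you inline the argument that $\Res_{\Gamma_L}\gamma=1$ (via Chebotarev on the extension $M/L$ cut out by $\Res_{\Gamma_L}\gamma$), whereas the paper packages this as Lemma~\ref{lem : tokendstrong} applied over $L$, using density of Frobenius elements.
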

\begin{corollary}\label{cor : cylic}
	If $G$ is cyclic then every strongly locally quadratic twist is quadratic.
\end{corollary}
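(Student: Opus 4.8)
The plan is to deduce this immediately from Proposition~\ref{prop : counterexample_conditions}. Suppose, for contradiction, that $A$ admits a twist $B$ which is strongly locally quadratic but not quadratic. By the equivalence in Proposition~\ref{prop : counterexample_conditions}, there exists an element $x\in H^1(G, E^{\times}/\{\pm 1\})$ satisfying conditions $(i)$, $(ii)$, and $(iii)$. The key observation is that condition $(ii)$ quantifies over \emph{all} cyclic subgroups $C\subseteq G$, and when $G$ is cyclic we may take $C=G$ itself. Then $(ii)$ asserts that $x$ restricts to $1$ in $H^1(G, E^{\times}/\{\pm 1\})$, which is to say $x=1$; this directly contradicts $(i)$. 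Hence no such $x$ exists, and therefore every strongly locally quadratic twist of $A$ is quadratic.

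There is essentially no obstacle here: the content is entirely packaged in Proposition~\ref{prop : counterexample_conditions}, and the corollary is the degenerate case where the restriction maps appearing in condition $(ii)$ include the identity map on $H^1(G, E^{\times}/\{\pm 1\})$. (Alternatively, one could argue directly that when $G$ is cyclic the inflation map $H^1(G, E^{\times}/\{\pm 1\})\hookrightarrow H^1(\Gamma_K, E^{\times}/\{\pm 1\})$ in the diagram has source detected locally by restriction to $G$, but invoking the proposition is the cleanest route and requires no further computation.)
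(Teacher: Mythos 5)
Your proof is correct and is exactly the intended argument: the paper omits the proof of Corollary~\ref{cor : cylic} because it is an immediate consequence of Proposition~\ref{prop : counterexample_conditions}, precisely by taking $C=G$ in condition~(ii) and noting the resulting contradiction with~(i). Nothing further to add.
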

We will also need a straightforward variant of Proposition \ref{prop : counterexample_conditions}.
We say that a subgroup $C\subseteq G$ is \emph{maximally cyclic} if it is cyclic and every subgroup $H\subseteq G$  that properly contains $C$ is not cyclic. Since condition (ii) is clearly equivalent to:
\begin{enumerate}
\item[(ii')]  $x$ restricts to $1$ in $H^1(C, E^{\times}/\{\pm 1\})$ for every maximally cyclic subgroup $C\subseteq G$,
\end{enumerate}
we can restate Proposition \ref{prop : counterexample_conditions} as follows.
\begin{proposition}\label{prop : conditionvariant}
	The following are equivalent:
\begin{enumerate}
		\item there exists a twist $B$ of $A$ which is strongly locally quadratic but not quadratic;
			\item there exists an element $x \in H^1(G, E^{\times}/\{\pm 1\})$ satisfying $(i)-(ii')-(iii)$ above.
\end{enumerate}
\end{proposition}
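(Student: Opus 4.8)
The plan is to deduce Proposition~\ref{prop : conditionvariant} from Proposition~\ref{prop : counterexample_conditions}: the two propositions have the same statement (1), so it suffices to prove that, for a fixed $x\in H^1(G, E^{\times}/\{\pm 1\})$, condition (ii) and condition (ii') are equivalent; once this is known, statement (2) of Proposition~\ref{prop : conditionvariant} becomes literally statement (2) of Proposition~\ref{prop : counterexample_conditions}.

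First I would dispatch the trivial implication from (ii) to (ii'): a maximally cyclic subgroup of $G$ is in particular cyclic, so the conditions imposed by (ii') form a subfamily of those imposed by (ii). For the converse, the first step is the purely group-theoretic remark that every cyclic subgroup $C'\subseteq G$ is contained in a maximally cyclic one. Since $G$ is finite, one may choose $C$ maximal with respect to inclusion among the cyclic subgroups of $G$ containing $C'$; if some cyclic $H$ satisfied $C\subsetneq H\subseteq G$, then $H$ would be a cyclic subgroup containing $C'$ strictly larger than $C$, contradicting the maximality of $C$, so $C$ is maximally cyclic in the sense of the definition preceding the proposition. The second step is the transitivity of restriction in group cohomology: for $C'\subseteq C\subseteq G$ one has $\Res^{G}_{C'} = \Res^{C}_{C'}\circ\Res^{G}_{C}$ on $H^1(-, E^{\times}/\{\pm 1\})$. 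Combining the two, given $x$ satisfying (ii') and an arbitrary cyclic subgroup $C'\subseteq G$, I pick a maximally cyclic $C$ with $C'\subseteq C$, use (ii') to get $\Res^{G}_{C}(x)=1$, and conclude $\Res^{G}_{C'}(x)=\Res^{C}_{C'}(1)=1$, which is exactly (ii).

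With (ii) $\Leftrightarrow$ (ii') established, an element $x\in H^1(G, E^{\times}/\{\pm 1\})$ satisfies $(i)-(ii')-(iii)$ if and only if it satisfies $(i)-(ii)-(iii)$, so Proposition~\ref{prop : counterexample_conditions} immediately gives the equivalence of (1) and (2) in Proposition~\ref{prop : conditionvariant}. I do not anticipate a genuine obstacle: the only non-formal ingredient is the finiteness of $G$, used to produce maximally cyclic overgroups, and the remainder is a formal manipulation of restriction maps. The point of Proposition~\ref{prop : conditionvariant} is practical rather than deep: it trims the family of cyclic subgroups one must test in condition (ii) down to the maximally cyclic ones, which is what keeps the cohomological verifications in the later sections manageable.
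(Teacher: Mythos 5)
Your proposal is correct and matches the paper's approach: the paper simply asserts that (ii) and (ii') are "clearly equivalent" and then cites Proposition~\ref{prop : counterexample_conditions}, while you spell out the reason (every cyclic subgroup of the finite group $G$ lies inside a maximally cyclic one, and restriction is transitive). The argument is the same, just made explicit.
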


Before giving the proof of Proposition \ref{prop :  counterexample_conditions}, we state a general lemma that will be useful in the rest of the paper. 

 \begin{lemma}\label{lem : tokendstrong}
If $K=L$, then every strongly locally quadratic twist of $A$ is a quadratic twist.
\end{lemma}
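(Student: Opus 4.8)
The plan is to use that the hypothesis $K=L$ makes the $\Gamma_K$-action on the endomorphism algebra trivial, which reduces the statement to one about continuous characters, and then to conclude by Chebotarev's density theorem.

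First I would set up the translation. Since $L/K$ is the minimal extension over which every endomorphism of $A$ is defined, the assumption $K=L$ means that $\Gamma_K$ acts trivially on $\End(A_{\overline K})$, hence on $E=\End(A_{\overline K})\otimes\Q$ and on $\Aut(A_{\overline K})=E^\times$. Therefore $H^1(\Gamma_K,\Aut(A_{\overline K}))$ is simply the group $\Hom_{\cont}(\Gamma_K,E^\times)$ of continuous characters (with $E^\times$ discrete), the map from $H^1(\Gamma_K,\{\pm1\})$ is injective with image the characters valued in $\{\pm1\}$, and the same applies over every completion $K_v$ once one invokes that $\Aut(A_{\overline{K_v}})\to\Aut(A_{\overline K})$ is an isomorphism (\cite[Corollary 12.13]{EvdGM}), compatibly with restriction of cohomology classes. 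Under these identifications a twist $B$ of $A$ is given by a continuous character $\chi\colon\Gamma_K\to E^\times$; then $B$ is a quadratic twist of $A$ exactly when $\chi(\Gamma_K)\subseteq\{\pm1\}$, and it is a quadratic twist of $A$ over $K_v$ exactly when $\chi(\Gamma_{K_v})\subseteq\{\pm1\}$. So the lemma reduces to the implication: if $\chi(\Gamma_{K_v})\subseteq\{\pm1\}$ for a density one set of $v\in\Sigma_K$, then $\chi(\Gamma_K)\subseteq\{\pm1\}$.

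To prove this implication I would argue by Chebotarev. By continuity $\chi$ has open kernel, hence factors through $\Gal(M/K)$ for some finite Galois extension $M/K$, inducing a homomorphism $\rho\colon\Gal(M/K)\to E^\times$. Then the hypothesis says that, for a density one set of $v$ unramified in $M$, one has $\rho(\Frob_v)\in\{\pm1\}$, where $\Frob_v\in\Gal(M/K)$ denotes the Frobenius conjugacy class at $v$. Given any $g\in\Gal(M/K)$, the Chebotarev density theorem shows that the set of $v$ with $\Frob_v$ conjugate to $g$ has positive density, so it meets the density one set above; since $E^\times$ is abelian, $\rho$ is constant on conjugacy classes, whence $\rho(g)\in\{\pm1\}$. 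As $g$ was arbitrary, $\rho(\Gal(M/K))\subseteq\{\pm1\}$, i.e. $\chi(\Gamma_K)\subseteq\{\pm1\}$, so $B$ is a quadratic twist of $A$.

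I expect the only genuinely delicate point to be the first step: the bookkeeping that, under a trivial Galois action, all the cohomology sets in play literally become character groups, and that ``quadratic twist over $K_v$'' corresponds to $\chi|_{\Gamma_{K_v}}$ being $\{\pm1\}$-valued compatibly with restriction. The Chebotarev input is completely routine, since a density one set meets every set of positive density. (One can also observe that this lemma is the degenerate case $G=\{1\}$ of Corollary~\ref{cor : cylic}, but since that corollary will be derived from cohomological machinery whose proof relies on the present lemma, it is the direct argument above that should be given.)
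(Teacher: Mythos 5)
Your proof is correct and takes essentially the same approach as the paper's: reduce to the observation that when the Galois action on $E^\times$ is trivial, the relevant $H^1$'s become character groups, so that a strongly locally quadratic twist is given by a character $\chi$ whose restriction to $\Gamma_{K_v}$ is $\{\pm1\}$-valued for a density one set of $v$, and then conclude that $\chi$ is globally $\{\pm1\}$-valued. The paper phrases the final step as ``decomposition groups form a dense subset of $\Gamma_K$'' while you make the underlying Chebotarev input explicit by factoring $\chi$ through a finite quotient; these are the same argument.
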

\begin{proof}

Since all endomorphisms of $A$ are defined over $K$, we also have that all the endomorphisms of $A$ are defined over $K_v$ for every $v\in \Sigma_K$. Hence

$$H^1(\Gamma_K, E^{\times})=\Hom(\Gamma_K,E^{\times})\quad \text{and}\quad H^1(\Gamma_{K_v}, E^{\times})=\Hom(\Gamma_{K_v},E^{\times}),$$ and the maps
$$H^1(\Gamma_K, \{\pm 1\})\rightarrow H^1(\Gamma_K, E^{\times})\quad \text{and}\quad H^1(\Gamma_{K_v}, \{\pm 1\})\rightarrow H^1(\Gamma_{K_v}, E^{\times})$$
are injective.
Let $B$ be a strongly locally quadratic twist of $A$ corresponding to an element $\chi\in H^1(\Gamma_K, E^{\times})$. It is enough to show that $\Image(\chi)=\{\pm 1\}$. But this holds on every decomposition group by assumption, hence it holds on all of $\Gamma_K$, since decomposition groups form a dense subset of $\Gamma_K$.
\end{proof}
\subsection{Proof}
We now prove Proposition \ref{prop : counterexample_conditions}.
We start by proving that (1) implies (2).
 Let $\tilde x\in H^1(\Gamma_K,E^{\times})$ be the cohomology class associated to $B$ and let $x$ be its image in $H^1(\Gamma_K,E^{\times}/\{\pm 1\})$. By construction, $\delta(x)=1$ and, since $B$ is not a quadratic twist of $A$, we have that $x\neq 1$. Since $A$ and $B$ are strongly locally quadratic twists, by Lemma \ref{lem : tokendstrong}, they are quadratic twists over $L$, hence the restriction  of $x$ in $H^1(\Gamma_L,E^{\times}/\{\pm 1\})$ is trivial, and thus $x\in H^1(G,E^{\times}/\{\pm 1\})$.
We are left to show that $x$ is trivial when restricted to any cyclic subgroup $C\subseteq G$. By Chebotarev, for every cyclic subgroup $C\subseteq G$ there is a positive density set of finite places $v$ of $K$ (unramified in $L$) such that the decomposition group $D_v$ is $C$. In particular we can choose one $v$ such that $A_v$ and $B_v$ are quadratic twists over $K_v$, so the restriction of $x$ to $H^1(\Gamma_{K_v}, E^{\times}/\{\pm 1\})$ is trivial. The conclusion follows from the commutative diagram
\begin{equation}\label{diagram: inf res to dec}
	\begin{tikzcd}
		 H^1(G, E^{\times}/\{\pm 1\})  \arrow[hook]{r} \arrow{d} & H^1(\Gamma_K, E^{\times}/\{\pm 1\}) \arrow{d} \\
		 H^1(D_v, E^{\times}/\{\pm 1\})  \arrow[hook]{r} & H^1(\Gamma_{K_v}, E^{\times}/\{\pm 1\}),
	\end{tikzcd}
\end{equation}
since the bottom horizontal arrow is injective.

We now prove that (2) implies (1). Let $x$ be as in the statement. Since the top horizontal map of \eqref{diagram: inf res to dec} is injective, the image of $x$ in $H^1(\Gamma_K, E^{\times}/\{\pm 1 \})$, that we still denote by $x$, is nontrivial. By assumption (iii), one has $\delta(x)=1$ (where $\delta$ is the connecting homomorphism defined in Section \ref{sec : statements}), so $x$ lifts to an element $\tilde x$ in $H^1(\Gamma_K, E^{\times})$. Thus $\tilde x$ defines a twist $A_{\tilde x}$ of $A$, which is not quadratic since $x \ne 1$.
Let $\Sigma$ be the set of finite places of $K$ which are not ramified in $L$, so that, for every $v\in \Sigma$, the decomposition group $D_v$ is cyclic. Since $\Sigma$ consists of all but finitely many places of $K$, it is enough to show that for every  $v\in \Sigma$, the twist $A_{\tilde x,v}$ of $A_v$ is quadratic. For this, it is enough to show that the restriction of $ x\in H^1(\Gamma_K,E^{\times}/\{\pm 1\})$ to  $ H^1(\Gamma_{K_v},E^{\times}/\{\pm 1\})$ is trivial. Since $D_v$ is cyclic, this follows from assumption (ii) and the commutative diagram \eqref{diagram: inf res to dec}.

	\section{Geometrically simple counterexamples}
In this section, after some group cohomology preliminaries, we prove Theorem~\ref{thm : mainstronglylocal}. We then discuss the minimality of our counterexample for $n=39$ among abelian varieties with potential complex multiplication by $\Q(\zeta_n)$ for odd $n$.
\subsection{Preliminaries}
\subsubsection{Cohomology of cyclic groups}
Let $C$ be a finite cyclic group of cardinality $n$ acting on an abelian group $M$, written multiplicatively, and write $g\in C$ for a generator. Let $M^C\subseteq M$ be the group of elements that are fixed by $C$ and $\norm{C}:M\rightarrow M$ be the norm map, sending  $m\in M$ to $mg(m)\dots g^{n-1}(m)$. Recall from \cite[VIII, $\S$4]{Ser89} that one has natural identifications:

\begin{equation}\label{equation: cohom identifications}
H^i(C; M)= \begin{cases}M^C \qquad &\text{ if } i=0, \\
\Kernel(\norm{C})/\langle g(m)m^{-1}\rangle_{m\in M} \quad  &\text{ if } i \text{ is  odd,} \\
M^{C}/\Image(\norm{C}) \quad  &\text{ if } i \text{ is  even.}
\end{cases}
\end{equation}

Under these identifications, if
$$1\rightarrow N\rightarrow M\rightarrow Q\rightarrow 1$$
is an exact sequence of $C$-abelian groups, then
the connecting morphism
$$\delta:\Kernel(\norm{C})/\langle g(q)q^{-1}\rangle_{q\in Q}\simeq H^1(C,Q)\rightarrow H^2(C,N)\simeq N^{C}/\Image(\norm{C})$$
has the following description: for $x\in \Kernel(\norm{C})$, choose a lift $\widetilde x\in M$ and set $\delta(x):=\norm{C}(\widetilde x)$.

Finally, for $i>0$ we note that $H^i(C,\{\pm 1\})\simeq \{\pm 1\}$ if $C$ is of even order and $H^i(C,\{\pm 1\})=1$ otherwise.\subsubsection{Preliminary lemmas}
Assume that $E/F$ is a finite Galois extension of number fields with Galois group $G$ and consider the short exact sequence of $G$-modules
 \begin{equation}\label{equation: ses F mod pm1}
 1\rightarrow \{\pm 1\}\rightarrow E^\times\rightarrow E^\times/\{\pm 1\}\rightarrow 1 .
 \end{equation}
 \begin{lemma}\label{lem : injectivitycyclic}
 \begin{enumerate}
 \item[]
 	\item The connecting morphism $H^1(G, E^\times/\{\pm 1\})\rightarrow H^2(G,\{\pm 1\})$ is injective. If $G$ is cyclic, it is induced by the norm map under the identifications \eqref{equation: cohom identifications}.
 	\item There is a natural short exact sequence
$$1 \rightarrow F^\times/\{\pm 1 \}\rightarrow (E^\times/\{\pm 1\})^G\rightarrow H^1(G,\{\pm 1\})\rightarrow 1.$$
 \end{enumerate}

 \end{lemma}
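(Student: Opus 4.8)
The plan is to extract both statements from the long exact sequence in $G$-cohomology attached to \eqref{equation: ses F mod pm1}, the only non-formal ingredient being Hilbert's Theorem~90 in the shape $H^1(G,E^\times)=1$.

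For part~(1), I would look at the segment
\[
H^1(G,E^\times)\longrightarrow H^1(G,E^\times/\{\pm 1\})\xrightarrow{\ \delta\ } H^2(G,\{\pm 1\})
\]
of that sequence. Since $H^1(G,E^\times)=1$, the connecting morphism $\delta$ has trivial kernel and is therefore injective. When $G$ is cyclic, I would then invoke the explicit description of the connecting map for cyclic groups recalled just before the statement, applied to $N=\{\pm 1\}$, $M=E^\times$, $Q=E^\times/\{\pm 1\}$: a class in $H^1(G,E^\times/\{\pm1\})$ is represented by some $\bar x\in E^\times/\{\pm1\}$ with $\norm{G}(\bar x)=1$, any lift $\widetilde x\in E^\times$ satisfies $\norm{G}(\widetilde x)\in\{\pm 1\}$, and $\delta$ sends the class of $\bar x$ to $\norm{G}(\widetilde x)\in\{\pm 1\}^G/\Image(\norm{G})=H^2(G,\{\pm1\})$. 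This is exactly the map induced by the norm under the identifications \eqref{equation: cohom identifications}.

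For part~(2), I would use the low-degree part of the same long exact sequence,
\[
1\longrightarrow \{\pm 1\}^G\longrightarrow (E^\times)^G\longrightarrow (E^\times/\{\pm 1\})^G\longrightarrow H^1(G,\{\pm 1\})\longrightarrow H^1(G,E^\times).
\]
Here $\{\pm 1\}^G=\{\pm 1\}$ because the $G$-action is trivial, $(E^\times)^G=F^\times$ by Galois theory, and $H^1(G,E^\times)=1$ by Hilbert~90, so the sequence reads $1\to\{\pm 1\}\to F^\times\to (E^\times/\{\pm 1\})^G\to H^1(G,\{\pm 1\})\to 1$. The first map is the inclusion $\{\pm 1\}\hookrightarrow F^\times$, hence the second map has kernel $\{\pm 1\}$ and factors through an injection $F^\times/\{\pm 1\}\hookrightarrow (E^\times/\{\pm 1\})^G$ whose cokernel, by exactness, is $H^1(G,\{\pm 1\})$; this is precisely the asserted short exact sequence.

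I do not expect a genuine obstacle here: the lemma is a repackaging of Hilbert's Theorem~90 together with the bookkeeping recorded in the cyclic-cohomology recollection above, and the only points to verify carefully are the standard identifications $(E^\times)^G=F^\times$ and $\{\pm1\}^G=\{\pm1\}$, and the compatibility of the general connecting-map formula with the explicit cyclic one stated earlier.
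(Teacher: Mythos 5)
Your proof is correct and follows exactly the same route as the paper: both parts are read off the long exact sequence in $G$-cohomology attached to \eqref{equation: ses F mod pm1}, with Hilbert~90 ($H^1(G,E^\times)=1$) supplying injectivity of $\delta$ for part~(1) and trivialising the final term for part~(2), and the cyclic-case description of $\delta$ coming from the explicit connecting-map formula recalled just before the lemma. You have merely spelled out the bookkeeping that the paper compresses into two sentences.
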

\proof
\begin{enumerate}
\item[]
	\item Injectivity follows from Hilbert's 90th theorem, while the description of the map in the case of a cyclic group follows from the previous discussion.

	\item This follows again from Hilbert's 90th theorem applied to the exact sequence in cohomology induced by \eqref{equation: ses F mod pm1}.
 \endproof
\end{enumerate}
Assume now that $K$ is a number field and that $A$ is a geometrically simple abelian variety over $K$ whose geometric endomorphism algebra $\End(A_{\overline K})\otimes \Q$ is a number field $E$. We let $K\subseteq L$ be the minimal extension over which all the endomorphisms of $A$ are defined. By Galois theory, $\Gal(L/K)$ is isomorphic to $\Gal(E/F)$, where $F=\End(A)\otimes \Q$.
\begin{lemma}\label{lem : reductionodddegree}
	Let $K\subseteq F\subseteq L$ be an intermediate Galois extension  corresponding to a normal subgroup $H\subseteq G=\Gal(L/K)$. Let $x\in H^1(G,E^{\times})$ and suppose that $[F:K]$ is odd. Then $x$ satisfies the conditions (i),(ii),(iii) of Proposition \ref{prop : counterexample_conditions} if and only its restriction to $H$ does.
	 \end{lemma}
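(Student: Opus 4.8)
The plan is to reduce the whole statement to one mechanism: for a finite-index subgroup the composite $\operatorname{cor}\circ\operatorname{res}$ is multiplication by the index, and every cohomology group in sight is $2$-torsion, so multiplication by the odd integer $[F:K]$ is invertible on all of them. First I would fix notation. Conditions (i)--(iii) of Proposition~\ref{prop : counterexample_conditions} only depend on the image of $x$ in $H^1(G,E^{\times}/\{\pm 1\})$, which I will also denote $x$. Put $H=\Gal(L/F)$, so $H\trianglelefteq G$ and $[G:H]=[F:K]$ is odd. Since $\overline F=\overline K$ and $F\subseteq L$, the geometric endomorphism algebra of $A_F$ is again $E$ and the minimal field of definition of all its endomorphisms is again $L$; hence the triple $(F,L,H)$ plays for $A_F$ exactly the role that $(K,L,G)$ plays for $A$, and ``the restriction of $x$ to $H$ satisfies (i)--(iii)'' unwinds to: $\operatorname{res}^G_H(x)\neq 1$; $\operatorname{res}^G_H(x)$ restricts to $1$ on every cyclic subgroup of $H$; and $\delta\bigl(\operatorname{inf}_H\operatorname{res}^G_H(x)\bigr)=1$ in $H^2(\Gamma_F,\{\pm 1\})$.

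Next I would record the $2$-torsion input. For every subgroup $C\subseteq G$, Hilbert 90 applied to $E/E^{C}$ --- the argument of Lemma~\ref{lem : injectivitycyclic}(1), which gives $H^1(C,E^{\times})=0$ --- shows that the connecting map embeds $H^1(C,E^{\times}/\{\pm 1\})$ into $H^2(C,\{\pm 1\})$, so $H^1(C,E^{\times}/\{\pm 1\})$ is killed by $2$; the groups $H^2(\Gamma_K,\{\pm 1\})$ and $H^2(\Gamma_F,\{\pm 1\})$ are $2$-torsion for trivial reasons. Since $\operatorname{cor}\circ\operatorname{res}$ equals multiplication by the index, and the indices $[G:H]$, $[\Gamma_K:\Gamma_F]=[F:K]$, and $[C:C\cap H]$ (the last of which divides $[G:H]$ because $H$ is normal) are all odd, I get that $\operatorname{res}^G_H$ on $H^1(-,E^{\times}/\{\pm 1\})$, that $\operatorname{res}^{\Gamma_K}_{\Gamma_F}$ on $H^2(-,\{\pm 1\})$, and that $\operatorname{res}^C_{C\cap H}$ on $H^1(-,E^{\times}/\{\pm 1\})$ for every cyclic $C\subseteq G$, are all injective.

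The ``only if'' direction is then pure functoriality: $\operatorname{res}^G_H$ preserves triviality, hence preserves (i) by injectivity; a cyclic subgroup of $H$ is a cyclic subgroup of $G$, so (ii) for $x$ forces (ii) for $\operatorname{res}^G_H(x)$; and by compatibility of $\delta$ with restriction and of inflation with restriction one has $\delta\bigl(\operatorname{inf}_H\operatorname{res}^G_H(x)\bigr)=\operatorname{res}^{\Gamma_K}_{\Gamma_F}\bigl(\delta(\operatorname{inf}_G x)\bigr)$, so (iii) is preserved too. For the ``if'' direction, (i) is free since $x=1$ would give $\operatorname{res}^G_H(x)=1$. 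For (iii), set $z=\delta(\operatorname{inf}_G x)\in H^2(\Gamma_K,\{\pm 1\})$; the hypothesis says $\operatorname{res}^{\Gamma_K}_{\Gamma_F}(z)=1$, so $[F:K]\,z=\operatorname{cor}\operatorname{res}(z)=1$, and as $z$ is $2$-torsion and $[F:K]$ odd this forces $z=1$. For (ii), fix a cyclic $C\subseteq G$; then $C\cap H$ is cyclic and contained in $H$, so $\operatorname{res}^C_{C\cap H}(\operatorname{res}^G_C x)=\operatorname{res}^G_{C\cap H}(x)=1$ by hypothesis, and injectivity of $\operatorname{res}^C_{C\cap H}$ yields $\operatorname{res}^G_C(x)=1$; since $C$ was arbitrary this is (ii) for $x$.

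I expect no genuine obstacle here: every step is either functoriality or the identity $\operatorname{cor}\circ\operatorname{res}=[\text{index}]$ on a $2$-torsion group. The only thing demanding care is the bookkeeping --- making sure ``the conditions for $\operatorname{res}^G_H(x)$'' are read in the cohomology of $H$ and in $H^2(\Gamma_F,\{\pm 1\})$, and keeping $\delta$, inflation, restriction, and corestriction straight --- so I would write the inflation/restriction/$\delta$ compatibilities down explicitly before running the argument.
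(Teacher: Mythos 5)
Your proof is correct and uses essentially the same mechanism as the paper: the paper cites \cite[Prop.~1.6.9]{NSW00} to get injectivity of restriction on the relevant $2$-torsion cohomology groups along odd-index subgroup inclusions, which is precisely your $\operatorname{cor}\circ\operatorname{res}=[\text{index}]$ argument spelled out. The only cosmetic difference is that for condition (ii) the paper passes through the embeddings $H^1(C,E^{\times}/\{\pm 1\})\hookrightarrow H^2(C,\{\pm 1\})$ and applies injectivity on the $H^2$ side, whereas you apply the cor--res identity directly to $H^1(C,E^{\times}/\{\pm 1\})$; both are valid and rest on the same $2$-torsion plus odd-index observation.
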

	\proof
	Consider the commutative diagram
\begin{center}
	\begin{tikzcd}[column sep = small]
H^1(G, E^{\times}/\{\pm 1\})\arrow[hook]{r}\arrow{d}& H^2(\Gamma_K, \{\pm 1\})\arrow{d}\\
H^1(H, E^{\times}/\{\pm 1\})\arrow[hook]{r}& H^2(\Gamma_F, \{\pm 1\}).
\end{tikzcd}
	\end{center}

Since $[G : H]$ is odd, by \cite[Prop. 1.6.9]{NSW00} the rightmost vertical map is injective, so conditions (i) and (iii) are equivalent for $H$ and $G$.

To prove that also condition (ii) is equivalent for $H$ and $G$, observe first that condition (ii) for $G$ clearly implies the one for $H$.
Conversely, assume now that condition (ii) holds for $H$, and let $C\subseteq G$ be a cyclic subgroup. Consider the commutative diagram
	\begin{center}
	\begin{tikzcd}[column sep = small]
H^1(G, E^{\times}/\{\pm 1\})\arrow{r}\arrow{d}& H^1(C, E^{\times}/\{\pm 1\})\arrow{d}\arrow[hook]{r} &H^2(C,\{\pm 1\})\arrow{d}\\
H^1(H, E^{\times}/\{\pm 1\})\arrow{r}& H^1(C\cap H, E^{\times}/\{\pm 1\})\arrow[hook]{r} & H^2(C\cap H,\{\pm 1\})
\end{tikzcd}
	\end{center}
	where the injectivity of the rightmost horizontal arrows is due to Lemma \ref{lem : injectivitycyclic}.
	Since $C/C\cap H\simeq CH/H\subseteq G/H$ is of odd order, the rightmost vertical map is also injective, thus the vanishing of the restriction of $x$ to $H^1(C\cap H, E^{\times}/\{\pm 1\})$ implies the vanishing of the restriction of $x$ to $H^1(C, E^{\times}/\{\pm 1\})$.
		\endproof
\subsection{An infinite family of counterexamples}
We start recalling the statement of our main result.
\begin{theorem}\label{theorem : highdimension}
Let $p$ be an odd prime such that $p\equiv 13 \pmod{24}$.
Let $A$ be a geometrically simple abelian variety over $\Q$ such that $A_{\overline \Q}$ has complex multiplication by $E:=\Q(\zeta_{3p})$.
Then there exists a twist of $A$ which is strongly locally quadratic but not quadratic.
\end{theorem}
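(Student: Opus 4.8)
By Proposition~\ref{prop : conditionvariant} (or equivalently Proposition~\ref{prop : counterexample_conditions}), it suffices to produce an element $x\in H^1(G,E^\times/\{\pm 1\})$ satisfying conditions (i)--(ii')--(iii), where $G=\Gal(L/\Q)\simeq\Gal(E/F)$ with $E=\Q(\zeta_{3p})$ and $F=\End(A)\otimes\Q$. Note $G\subseteq(\Z/3p\Z)^\times\simeq(\Z/3\Z)^\times\times(\Z/p\Z)^\times$, which is cyclic of order $2(p-1)$; since $p\equiv 13\pmod{24}$, the $2$-part of $p-1$ has the right shape for the Grunwald--Wang obstruction to appear. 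The first step is to determine $G$ precisely: $A$ is defined over $\Q$ and geometrically has CM by $E$, so $L=\Q(\zeta_{3p})$ provided the CM is ``maximal'' in the relevant sense; in any case $G$ is the full group $(\Z/3p\Z)^\times$ or a subgroup thereof, and one should argue (using that $A$ itself is defined over $\Q$, not merely $A_{\overline\Q}$) that $G$ is the whole cyclic group of order $2(p-1)$. Actually, for a CM abelian variety over $\Q$ with CM by $E$, the field $L$ over which all endomorphisms are defined is the CM reflex-type field, and here one expects $L=E$ itself is not quite right — rather $G\simeq\Gal(E/F)$ — so one must identify $F$ and check $G$ is \emph{not} cyclic, since Corollary~\ref{cor : cylic} forbids counterexamples for cyclic $G$.

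**Continuing.** The key arithmetic input is that $p\equiv 13\pmod{24}$ forces $p\equiv 1\pmod 4$ and $p\equiv 1\pmod 3$, so $24\mid 2(p-1)$ is false in general, but $8\mid p-1$ fails (since $p\equiv 5\pmod 8$) — wait, $p\equiv 13\pmod{24}$ gives $p\equiv 5\pmod 8$, so the $2$-Sylow of $(\Z/p\Z)^\times$ is $\Z/4\Z$, and together with the $\Z/2\Z$ from $(\Z/3\Z)^\times$ the $2$-part of $G$ is $\Z/4\Z\times\Z/2\Z$, which is non-cyclic. This is precisely the Grunwald--Wang situation (the "special case" at the prime $2$). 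The plan is then: use Lemma~\ref{lem : reductionodddegree} to strip away the odd part of $G$, reducing to the $2$-Sylow subgroup $P\simeq\Z/4\Z\times\Z/2\Z$ acting on $E^\times/\{\pm1\}$ through its quotient; then exhibit $x\in H^1(P,(E')^\times/\{\pm1\})$ (with $E'$ the fixed field of a complement) that is nontrivial, restricts trivially to every cyclic subgroup, and dies in $H^2(\Gamma_\Q,\{\pm1\})=\Br(\Q)[2]$. Conditions (i) and (ii') are a pure group-cohomology computation for $\Z/4\Z\times\Z/2\Z$ acting on a $\pm1$-quotient of a unit group; condition (iii) is where class field theory enters: one computes $\delta(x)\in H^2(\Gamma_\Q,\{\pm1\})$ as a quaternion algebra and shows it is split by checking it is locally split at all places — this uses that $A$ and the twist are quadratic twists \emph{locally everywhere}, reflecting back the Grunwald--Wang phenomenon that $16$ (or the relevant unit) is a local square/$2^k$-th power everywhere but not globally.

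**The main obstacle.** I expect the hard part to be condition (iii): verifying that the natural nontrivial class $x\in H^1(G,E^\times/\{\pm1\})$ coming from the Grunwald--Wang cocycle actually lifts to $H^1(\Gamma_\Q,E^\times)$, i.e. that its image under $\delta$ in the Brauer group vanishes. Concretely one must realize $\delta(x)$ via the explicit description in \eqref{equation: cohom identifications} (for the cyclic pieces) and a Lyndon--Hochschild--Serre analysis for the non-cyclic $P$, identify it with a specific $2$-torsion Brauer class $\bigl(a,b\bigr)_\Q$ for explicit $a,b\in\Q^\times$ built from units and roots of unity in $E$, and then invoke the reciprocity/sum formula $\sum_v\mathrm{inv}_v=0$ together with local computations showing $\mathrm{inv}_v=0$ at every finite and infinite place. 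The delicate points are: (a) controlling the $2$-adic and real places, where the Hilbert symbol is most subtle; (b) making sure the \emph{same} $x$ simultaneously satisfies (i), which requires genuine non-splitting over $\Q$ of the associated quadratic-twist question — this is the tension at the heart of Grunwald--Wang and must be resolved by the specific congruence $p\equiv 13\pmod{24}$ rather than merely $p\equiv 5\pmod 8$. A secondary obstacle is bookkeeping: pinning down $G$ and the $G$-module structure of $E^\times/\{\pm1\}$ precisely enough that the cohomology of the non-cyclic $2$-group $P$ can be computed by hand, likely via an explicit $2$-cocycle and the inflation map from $\Z/4\Z\times\Z/2\Z$.
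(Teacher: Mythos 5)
Your proposal correctly identifies the high-level strategy — apply Proposition~\ref{prop : counterexample_conditions} (or~\ref{prop : conditionvariant}), observe that the $2$-Sylow of $G$ is $\Z/4\Z\times\Z/2\Z$ because $p\equiv 5\pmod 8$, reduce the odd part via Lemma~\ref{lem : reductionodddegree}, and invoke class field theory (the reciprocity sum $\sum_v\mathrm{inv}_v=0$) for condition (iii). This is the same skeleton the paper uses. But the proposal stops at the skeleton: you never actually produce a candidate class $x\in H^1(G,E^\times/\{\pm 1\})$. You speak of ``the Grunwald--Wang cocycle'' and of a Brauer class $(a,b)_\Q$ ``built from units and roots of unity in $E$'' but give no construction. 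Without a concrete $x$, there is nothing to verify conditions (i)--(iii) against, so the argument cannot be completed as written. The missing idea is precisely the heart of the paper's proof: since $p\equiv 1\pmod 3$, $p$ splits in $\Q(\sqrt{-3})$, so there exist $a,b\in\Q$ with $a^2+3b^2=3p$, and one sets $y:=(a+b\sqrt{-3})/\sqrt{-3p}\in(E^\times/\{\pm 1\})^{G_1}$ where $G_1=\Gal(E/\Q(\sqrt{-3}))$; then $\sigma(y)y=-1$ for $\sigma$ the generator of $G/G_1\simeq\Z/2\Z$, and $y$ defines the cohomology class $x$ by inflation from $G/G_1$. Note this is a different (and simpler) reduction than the one you propose: the paper inflates from the order-$2$ quotient $G/G_1$, not from the $2$-Sylow subgroup; the $2$-Sylow (via the maximal odd subextension $k$) appears only when checking (ii').

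There are also some errors of fact in the proposal worth flagging. You write that $(\Z/3p\Z)^\times$ ``is cyclic of order $2(p-1)$''; it is not cyclic, since $(\Z/3\Z)^\times\times(\Z/p\Z)^\times\simeq\Z/2\Z\times\Z/(p-1)\Z$ with $p-1$ even (you in effect correct this a few lines later, but the initial statement is wrong). You also express doubt about whether $L=E$; in fact $L=E$ holds here: since $A$ is geometrically simple with CM by the cyclotomic field $E=\Q(\zeta_{3p})$, the CM type is primitive and the reflex field of $E$ is $E$ itself, whence $L=E$ by standard CM theory (the paper cites Shimura and Lang for this). Your instinct that the full congruence $p\equiv 13\pmod{24}$, not merely $p\equiv 5\pmod 8$, must be used is correct, but you do not say where: $p\equiv 1\pmod 3$ is needed both to define $y$ (so that $3p$ is a norm from $\Q(\sqrt{-3})$) and, via quadratic reciprocity, to show $p$ splits in $\Q(\sqrt{-3})$ so that the decomposition group $D_p\subseteq G$ is cyclic — which is what makes condition (iii) checkable at the place $p$. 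Without the explicit $y$ and this last observation, the plan cannot be turned into a proof.
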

Since $A$ has potential complex multiplication by $E$ and it is geometrically simple, by \cite[Prop. 26 and Prop. 28, Chap. II]{Shi98}, the reflex field of $E$ is $E$ itself. Then, by \cite[Prop. 20.4]{Shi98} or \cite[Prop. 5.17]{Shi71} or \cite[Thm. 1.1, Chap. 3]{Lan83}, the field $E$ is also the minimal field of definition of all the endomorphisms of $A$.

We write $G:=\Gal(E/\Q)$ and $G_1:=\Gal(E/\Q(\sqrt{-3}))$, so that $G/G_1\simeq \Z/2 \Z$. Let $\sigma$ be the projection of complex conjugation to $G/G_1$. It naturally acts on $(E^\times /\{\pm 1\})^{G_1}$.

Since $p \equiv 1 \pmod 3$, it splits in $\Q(\sqrt{-3})$. Hence there exist $a,b\in \mathbb \Q$ such that $a^2+3b^2=3p$.

We let $y:=\dfrac{a+b\sqrt{-3}}{\sqrt{-3p}}\in E^{\times}/\{\pm 1\}$ and we observe that $y\in (E^\times/\{\pm 1\})^{G_1}$. By construction, one has $\sigma(y)y=-1$, so we can associate to $y$ a cohomology class $x\in H^1(G/G_1, (E^\times/\{\pm 1\})^{G_1})$. By inflation, we get a cohomology class $x\in H^1(G,E^\times/\{\pm 1\})$. By Proposition \ref{prop : counterexample_conditions}, it is then enough to show that $x$ satisfies conditions (i), (ii') and (iii) from Section \ref{sec : statements}.

\subsubsection{Condition (i)}
Since the inflation map is injective, it is enough to show that $x$ is nontrivial in $H^1(G/G_1, (E^\times/\{\pm 1\})^{G_1})$.
By Lemma \ref{lem : injectivitycyclic}, there is a short exact sequence of $G/G_1$-modules
$$1\rightarrow \Q(\sqrt{-3})^{\times}/\{\pm 1\}\rightarrow (E^{\times}/\{\pm 1\})^{G_1}\rightarrow H^1(G_1,\{\pm 1\})\rightarrow 1.$$
Since $G_1$ is cyclic of even order, we have that $H^1(G_1,\{\pm 1\})\simeq \{\pm 1\}$. Since $y\in (E^{\times}/\{\pm 1\})^{G_1}$ is not in the image of $\Q(\sqrt{-3})^{\times}/\{\pm 1\}\rightarrow (E^{\times}/\{\pm 1\})^{G_1}$, it maps non-trivially in $H^1(G_1,\{\pm 1\})$. Since $G/G_1$ is cyclic of even order and it acts trivially on $H^1(G_1,\{\pm 1\})\simeq \{\pm 1\}$, one has
$$
H^1(G/G_1,H^1(G_1,\{\pm 1\}))\simeq H^1(G/G_1,\{\pm 1\})\simeq \{\pm 1\}\simeq H^1(G_1,\{\pm 1\}).
$$
The composition of the map
$$
H^1(G/G_1,(E^{\times}/\{\pm 1\})^{G_1})\rightarrow H^1(G/G_1,H^1(G_1,\{\pm 1\}))
$$
with the above isomorphism sends $x$ to the image of $y$ under the map $(E^{\times}/\{\pm 1\})^{G_1}\rightarrow H^1(G_1,\{\pm 1\})$. We have seen that this image is nontrivial, and hence $x$ is nontrivial as well.
\subsubsection{Condition (ii')} \label{proof : cond2}
Let $\Q\subseteq k$ be the maximal subextension of $\Q \subseteq E$ of odd degree. By Lemma \ref{lem : reductionodddegree}, proving (ii') is equivalent to proving it after restricting to $k$. We will denote by $R$ the maximal totally real subfield $k(\zeta_{3p}+\overline{\zeta_{3p}})$ of $E$.
Since $p\not \equiv 1 \pmod 8$ and $p \equiv 1 \pmod 4$, the group  $H:=\Gal(E/k)$ is isomorphic to $\mathbb Z/2\Z\times \mathbb Z/4\Z$, and we have the following diagram of intermediate extensions.
\begin{center}
	\begin{tikzcd}
		&& E=\mathbb{Q}(\zeta_{3p}) \arrow[dash]{dl}{2} \arrow[dash]{d}{2} \arrow[dash]{dr}[swap]{2} & \\
		& k(\sqrt{-3},\sqrt{-3p}) \arrow[dash]{dl}{2} \arrow[dash]{d}{2} \arrow[dash]{dr}[swap]{2} & k(\zeta_{p}) \arrow[dash]{d}{2} & R \arrow[dash]{dl}{2}\\
		k(\sqrt{-3}) \arrow[dash]{dr}{2} & k(\sqrt{-3p}) \arrow[dash]{d}{2} & k(\sqrt{p}) \arrow[dash]{dl}[swap]{2} & \\
		& k \arrow[dash]{d}{\frac{p-1}{4}}& &&\\
		& \mathbb{Q} & &
	\end{tikzcd}
\end{center}

There are only four maximally cyclic subgroups $H_1,\dots , H_4 \subseteq H$, corresponding to the four extensions of $k$ depicted in the diagram below.
\begin{center}
	\begin{tikzcd}
		&& E  & \\
		k(\sqrt{-3}) \arrow[bend left, dash]{rru}{H_1\simeq \Z/4\Z} & k(\sqrt{-3p}) \arrow[dash]{ru}{H_2\simeq\Z/4\Z} & &k(\zeta_{p})\arrow[swap, dash]{lu}{H_3\simeq\Z/2\Z} & R \arrow[bend right, dash, swap]{llu}{H_4\simeq\Z/2\Z}\\
		&& k\arrow[dash]{ull}\arrow[dash]{ul}\arrow[dash]{rru} \arrow[dash]{ru}& &&\\

	\end{tikzcd}
\end{center}
We write $\sigma_i$ for a generator of $H_i$.
Since, by construction, $x$ is inflated from $H/H_1$, it goes to $1$ in  $H^1(H_1, E^{\times}/\{\pm 1\})$.

Recall from Lemma \ref{lem : injectivitycyclic} that the norm map
$$\norm{H_i}: H^1(H_{i},E^{\times}/\{\pm 1\})\rightarrow H^2(H_{i},\{\pm 1\})\simeq \{\pm 1\}$$
is injective. Hence it will suffice to show that
$\norm{H_i}(y)=1$ for $i=2,3,4$.

Since $H_2$ fixes $k(\sqrt{-3p})$, one has $\sigma_2(y)=\dfrac{a-b\sqrt{-3}}{\sqrt{-3p}}$. In particular, we have
$$\norm{H_2}(y)=\prod_{i=0}^3 \sigma_2^i(y)=(y \sigma(y))^2=\left(\dfrac{3p}{-3p}\right)^2=1.$$

For $i=3,4$, we see that $\sigma_i$ acts nontrivially on both $\sqrt{-3}$ and $\sqrt{-3p}$, thus sending $y$ to its complex conjugate. Therefore $\norm{H_i}(y)=y\sigma_i(y)=1$.
\subsubsection{Condition (iii)}
Recall that, for every field $M$, we have $H^2(\Gamma_M,\{\pm 1\})\simeq \Br(M)[2]$. Hence, the fundamental exact sequence of class field theory yields a short exact sequence

$$1\rightarrow H^2(\Gamma_{\mathbb{Q}}, \{\pm 1\}) \rightarrow H^2(\Gamma_{\mathbb{R}},\{\pm 1\})\times \prod_{q\in \Sigma_\Q} H^2(\Gamma_{\mathbb{Q}_q},\{\pm 1\})\xrightarrow{\sum \text{res}_q} \frac{1}{2}\Z/\Z\rightarrow 0,$$
where $\text{res}_\infty: H^2(\Gamma_\mathbb{R},\{\pm 1\}) \rightarrow \frac{1}{2}\Z/\Z$ and $\text{res}_q: H^2(\Gamma_{\mathbb{Q}_q},\{\pm 1\})\rightarrow \frac{1}{2}\Z/\Z$ are the residue morphisms.
Hence, it is enough to show that the class of $x$ becomes trivial in $H^2(\Gamma_\mathbb{R},\{\pm 1\})$ and in $H^2(\Gamma_{\mathbb{Q}_q},\{\pm 1\})$ for all primes $q$, except at most one.

To check the infinite place, write $C:=\Gal(E/R)$ for the Galois group of the maximal totally real subfield $R\subseteq E$. Since $C$ is cyclic, the restriction of $x$ to $H^1(C,E^{\times}/\{\pm 1\})$ is trivial by Section \ref{proof : cond2}.
Then the commutative diagram
\begin{center}
	\begin{tikzcd}
		H^1(G,E^{\times}/\{\pm 1\})\arrow{r}\arrow{d} & 	H^1(\Gamma_{\mathbb{Q}},E^{\times}/\{\pm 1\})\arrow{r} \arrow{d} &H^2(\Gamma_\Q,\{\pm 1\})\arrow{d}\\
		H^1(C,E^{\times}/\{\pm 1\})\arrow{r} & 	H^1(\Gamma_{\mathbb R},E^{\times}/\{\pm 1\})\arrow{r} &H^2(\Gamma_{\mathbb R},\{\pm 1\})\\
			\end{tikzcd}
\end{center}
shows that the image of $x$ in $H^2(\Gamma_{\mathbb{R}},\{\pm 1\})$ is trivial.

We claim that, for all primes $q \neq 3$, the decomposition group $D_q\subseteq G$ is cyclic. If $q\neq p$, this follows from the fact that $q$ is unramified in $E$. To check that also $D_p$ is cyclic,  we first observe that $$\left(\dfrac{-3}{p}\middle)=\middle(\dfrac{3}{p}\middle)=\middle(\dfrac{p}{3}\right)=1,$$
by quadratic reciprocity and the assumption  $p\equiv 1 \pmod 4$. Hence $p$ splits in $\Q(\sqrt{-3})$, so that $D_p\simeq \Gal(\Q(\zeta_p)/\Q)\simeq \Z/(p-1)\Z$ is cyclic.

In particular, the restriction of $x$ to $H^1(D_q,E^{\times}/\{\pm 1\})$ vanishes by Section \ref{proof : cond2}.
Hence the commutative diagram
\begin{center}
	\begin{tikzcd}
		H^1(G,E^{\times}/\{\pm 1\})\arrow{r}\arrow{d} & 	H^1(\Gamma_{\mathbb{Q}},E^{\times}/\{\pm 1\})\arrow{r} \arrow{d} &H^2(\Gamma_\Q,\{\pm 1\})\arrow{d}&\\
		H^1(D_q,E^{\times}/\{\pm 1\})\arrow{r} & 	H^1(\Gamma_{\mathbb{Q}_q},E^{\times}/\{\pm 1\})\arrow{r} &H^2(\Gamma_{\mathbb{Q}_q},\{\pm 1\})&\\
			\end{tikzcd}
\end{center}
shows that the image of $x$ in $H^2(\Gamma_{\mathbb{Q}_q},\{\pm 1\})$ is trivial.  \endproof


\subsection{Minimality of the counterexample}
In the previous section, we constructed an infinite family of geometrically simple counterexamples to Question \ref{qu : mainquestion}. In particular, when $p=13$, we obtain one example in dimension $12$. We now show that such a counterexample is the one of smaller dimension among all geometrically simple abelian varieties with geometric complex multiplication by $\Q(\zeta_n)$ for odd $n$. To do so, let $n$ be an odd number such that $\phi(n)<24$ and let $A$ be an abelian variety as above.

If $(\Z/n\Z)^{\times}$ is cyclic (i.e. $n$ is a power of an odd prime), then no twist of $A$ can yield a counterexample by Corollary \ref{cor : cylic}. This leaves only three possibilities for $n$, namely $15=3\cdot 5$, $21=3\cdot 7$ and $33=3\cdot 11$.
All of these are excluded by the following proposition.
\begin{proposition}\label{prop : minimality}
Let $p$ be a prime.	Let $A$ be an abelian variety over $\Q$ such that $A_{\overline \Q}$ has complex multiplication by $E:=\mathbb Q(\zeta_{3p})$ and is geometrically simple.
Assume that either
\begin{enumerate}
 \item $p\equiv 2 \pmod 3$, or
	\item $p\equiv 3 \pmod 4$.
\end{enumerate}
	Then, every strongly locally quadratic twist of $A$ is a (global) quadratic twist of $A$.
\end{proposition}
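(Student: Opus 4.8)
The plan is to verify that, under either hypothesis, condition (2) of Proposition~\ref{prop : conditionvariant} has no solution. We may assume $p\neq 2,3$, since otherwise $G:=\Gal(E/\Q)\cong(\Z/3p\Z)^{\times}$ is cyclic and Corollary~\ref{cor : cylic} applies. Thus $G\cong\Z/2\Z\times\Z/(p-1)\Z$; write $p-1=2^a m$ with $m$ odd and $a\geq 1$, let $H\leq G$ be the (normal, since $G$ is abelian) $2$-Sylow subgroup $H\cong\Z/2\Z\times\Z/2^a\Z$, and let $k=E^H$, a Galois extension of $\Q$ of odd degree $m$. By Lemma~\ref{lem : reductionodddegree} it suffices to show that no $x\in H^1(H,E^{\times}/\{\pm 1\})$ satisfies conditions (i), (ii$'$), (iii), where (iii) now means that $x$ dies in $H^2(\Gamma_k,\{\pm 1\})=\Br(k)[2]$. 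By Hilbert~90, $H^1(C,E^{\times})=0$ for every subgroup $C\leq H$, so by Lemma~\ref{lem : injectivitycyclic} the connecting map $\delta$ of $1\to\{\pm 1\}\to E^{\times}\to E^{\times}/\{\pm 1\}\to 1$ embeds $H^1(H,E^{\times}/\{\pm 1\})$ into $H^2(H,\Z/2\Z)$, compatibly with restriction to subgroups. Writing $\bar x=\delta(x)$ and using functoriality of $\delta$, condition (ii$'$) says that $\bar x$ restricts to $0$ in $H^2(C,\Z/2\Z)$ for every maximal cyclic $C\leq H$, and condition (iii) says that the inflation of $\bar x$ to $H^2(\Gamma_k,\Z/2\Z)=\Br(k)[2]$ vanishes; so it is enough to prove that no nonzero $\bar x\in H^2(H,\Z/2\Z)$ meets both conditions.

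For the group-cohomology input, by Künneth $H^2(H,\Z/2\Z)$ is $3$-dimensional over $\F_2$, spanned by $u^2$, $u\cup v$ and a class $y$ pulled back from $H^2(\Z/2^a\Z,\Z/2\Z)$, where $u,v$ are the characters of $H$ given by the two coordinate projections. I would compute the restriction maps to the maximal cyclic subgroups. If $a=1$---equivalently $p\equiv 3\pmod 4$, which covers case (2)---then $H\cong(\Z/2\Z)^2$ has three order-two subgroups and the intersection of the three restriction kernels is $0$, so condition (ii$'$) already forces $\bar x=0$, contradicting (i). If $a\geq 2$, then already the kernels of the restrictions to the two cyclic direct factors $\Z/2\Z$ and $\Z/2^a\Z$ of $H$ intersect exactly in the line $\langle u\cup v\rangle$ (using that $u$ restricts to $0$ on the second factor and that the degree-one generator of $H^{*}(\Z/2^a\Z,\F_2)$ squares to $0$ for $a\geq 2$), so condition (ii$'$) leaves $\bar x=u\cup v$ as the only remaining possibility.

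It then remains to exclude $\bar x=u\cup v$ when $a\geq 2$, and this is where condition (iii) and the hypotheses are used: $a\geq 2$ forces $p\equiv 1\pmod 4$, so case (2) fails and we must be in case (1), i.e. $p\equiv 2\pmod 3$. I would localise at the unique place $w$ of $k$ above $p$ (unique because $p$ is totally ramified in $\Q(\zeta_p)\supseteq k$). From $p\equiv 2\pmod 3$ we get $\left(\frac{-3}{p}\right)=-1$, so $p$ is inert in $\Q(\sqrt{-3})$; combined with the total ramification of $p$ in $\Q(\zeta_p)$, this makes the decomposition group of $p$ in $G$ equal to $G$, hence the decomposition group of $w$ in $\Gal(E/k)$ equal to $H$. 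Therefore the image of $\bar x=u\cup v$ in $\Br(k_w)[2]$ is simply the inflation of $\bar x$ along $\Gamma_{k_w}\to H$, namely the cup product of the inflations to $H^1(\Gamma_{k_w},\Z/2\Z)$ of $u$ and $v$; via Kummer theory these are the classes of $-3$ and of $p$ in $k_w^{\times}/(k_w^{\times})^2$ (here $v$ cuts out the compositum of $k$ with the quadratic subfield $\Q(\sqrt{p})$ of $\Q(\zeta_p)$, since $p\equiv 1\pmod 4$), so the image is the quaternion class $(-3,p)_{k_w}$. Now $k_w/\Q_p$ is totally ramified of odd degree $m$, so $v_{k_w}(p)=m$ is odd and the residue field is $\F_p$; since $\left(\frac{-3}{p}\right)=-1$, the extension $k_w(\sqrt{-3})$ is the unramified quadratic extension of $k_w$, whose norm group consists of the elements of even valuation. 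As $p$ has odd valuation it is not a norm, so $(-3,p)_{k_w}\neq 0$; hence $\bar x$ does not die in $\Br(k_w)[2]$, a fortiori not in $\Br(k)[2]$ (which injects into $\prod_v\Br(k_v)[2]$), and (iii) fails. So no $x$ satisfies (i), (ii$'$), (iii), and Proposition~\ref{prop : conditionvariant} yields the claim.

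The hard part is precisely this last step: condition (ii$'$) kills all candidates by itself only when $8\nmid\#G$ (that is, $a=1$); for $a\geq 2$ the class $u\cup v$ restricts trivially to every maximal cyclic subgroup, so one genuinely needs the global obstruction (iii) at the ramified prime $p$, and it is the hypothesis $p\equiv 2\pmod 3$---making $-3$ a non-square mod $p$---that forces $(-3,p)_{k_w}$ to be non-split. When instead $p\equiv 1\pmod 3$ and $p\equiv 1\pmod 4$, as for $p\equiv 13\pmod{24}$, this quaternion algebra splits and $u\cup v$ survives, which is the origin of the counterexamples constructed above.
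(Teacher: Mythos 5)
Your proof is correct, but it takes a genuinely different route from the paper's. The paper never invokes condition (iii) at all: it shows directly that conditions (i) and (ii) are already incompatible, by exhibiting a cyclic index-two subgroup $G_i\subseteq G$ (corresponding to an imaginary quadratic field $F_i$) for which $H^1(G/G_i,(E^{\times}/\{\pm 1\})^{G_i})=1$, so that the inflation--restriction sequence forces injectivity of restriction to $G_i$. The vanishing of this $H^1$ is in turn reduced, via Hilbert~90 and the short exact sequence of Lemma~\ref{lem : injectivitycyclic}(2), to a norm condition for the imaginary quadratic field $F_i/\Q$ ($p$ not a norm for $\Q(\sqrt{-3})/\Q$ when $p\equiv 2\pmod 3$; or one of $3$, $p$ not a norm when $p\equiv 3\pmod 4$, using quadratic reciprocity). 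By contrast, you reduce to the $2$-Sylow subgroup $H$ via Lemma~\ref{lem : reductionodddegree}, embed into $H^2(H,\F_2)$, compute that ring via K\"unneth, and observe that condition (ii$'$) alone pins the candidate class down to $u\cup v$ when $a\geq 2$; the residual class is then killed by condition (iii) through an explicit local Brauer-group computation at the totally ramified prime $p$, where the Hilbert symbol $(-3,p)_{k_w}$ is seen to be non-split precisely because $p$ is inert in $\Q(\sqrt{-3})$ and $v_{k_w}(p)$ is odd. Your approach is more computational and makes transparent exactly which Brauer class is the obstruction; it also explains cleanly why $p\equiv 13\pmod{24}$ escapes (the quaternion algebra splits), whereas the paper's argument is shorter and more uniform, handling both cases by the same vanishing criterion for an $H^1$. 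One small point worth flagging: your K\"unneth computation shows that, inside $H^2(H,\F_2)$, the restriction to $\langle t\rangle$ has nontrivial kernel $\langle u^2,uv\rangle$ for $a\geq 2$, whereas the paper's Lemma~\ref{lemma: H1zero} shows that the corresponding restriction on the subgroup $H^1(H,E^{\times}/\{\pm 1\})\hookrightarrow H^2(H,\F_2)$ is injective --- this is consistent, since the image of $\delta$ need not be all of $H^2(H,\F_2)$, but it means your argument discards more information at the group-cohomology stage and then has to recover it arithmetically via (iii).
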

\proof
Since $A$ is geometrically simple and has potential complex multiplication by~$E$, by \cite[Prop. 5.17]{Shi71} or \cite[Thm. 1.1, Chap. 3]{Lan83}, the field $E$ is also the minimal field of definition of all the endomorphisms of $A$. Let $G:=\Gal(E/\Q)$.
By Proposition \ref{prop : counterexample_conditions}, it is enough to show that there are no non trivial elements in $H^1(G,E^{\times}/\{\pm 1\})$ which vanish on every cyclic subgroup. To show this, it suffices to prove that there exists at least one cyclic subgroup $H\subseteq G$ such that $H^1(G/H,(E^{\times}/\{\pm 1\})^H)=1$, since then restriction to $H$ is injective by inflation-restriction.
For this we will use the following lemma.

\begin{lemma}\label{lemma: H1zero}
Let $H\subseteq G$ be a cyclic index two subgroup corresponding to an imaginary quadratic field $ F=\Q(\sqrt{-d})$ for $d\in \Q^\times$. Then
\begin{enumerate}
	\item  $H^1(G/H, F^\times/\{\pm 1\}) =1$.
    \item  Suppose there exists $x\in (E^\times/\{\pm 1\})^H$ such that $\norm{G/H}(x)\not \in \norm{G/H}(F^\times)$, then $H^1(G/H, (E^{\times}/\{\pm 1\})^H) =1$.\end{enumerate}
\end{lemma}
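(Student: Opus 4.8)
The plan is to reduce both parts to a calculation with the cyclic group $C:=G/H$, which has order two and is canonically identified with $\Gal(F/\Q)$; write $\sigma$ for its nontrivial element. It acts on $F^\times$, on $F^\times/\{\pm 1\}$, and on $M:=(E^\times/\{\pm 1\})^{H}$ (the latter since $H$ is normal of index two in $G$), and in each case $\norm{C}(z)=z\,\sigma(z)$; on $F^\times$ this is the field norm to $\Q$. For part (1): by the description \eqref{equation: cohom identifications}, a class in $H^1(C,F^\times/\{\pm 1\})$ is represented by some $m\in F^\times$ with $\norm{C}(m)=m\,\sigma(m)\in\{\pm 1\}$; since $F$ is imaginary quadratic this norm is a positive rational, so $\norm{C}(m)=1$, and Hilbert~90 for $F/\Q$ writes $m=n/\sigma(n)$ for some $n\in F^\times$. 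Taking $q$ to be the class of $\sigma(n)$, one has $\sigma(q)q^{-1}=m$ in $F^\times/\{\pm 1\}$, so $m$ is a coboundary and $H^1(C,F^\times/\{\pm 1\})=1$. (Equivalently: run the long exact sequence of \eqref{equation: ses F mod pm1} for $F/\Q$; since $-1$ is not a norm from $F$, the map $H^2(C,\{\pm 1\})\to H^2(C,F^\times)=\Q^\times/\norm{C}(F^\times)$ is injective, which with Lemma~\ref{lem : injectivitycyclic}(1) gives the same vanishing.)

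For part (2), apply Lemma~\ref{lem : injectivitycyclic}(2) to $E/F$ to get a short exact sequence of $C$-modules
$$1\longrightarrow F^\times/\{\pm 1\}\longrightarrow M\stackrel{\pi}{\longrightarrow} H^1(H,\{\pm 1\})\longrightarrow 1 .$$
If $H$ has odd order then $H^1(H,\{\pm 1\})=1$, so $M=F^\times/\{\pm 1\}$ and the hypothesis of (2) is vacuous; hence we may assume $|H|$ is even, in which case $H^1(H,\{\pm 1\})\simeq\{\pm 1\}$, with trivial $C$-action since $G$ is abelian. Passing to $C$-cohomology and inserting part (1), the long exact sequence reads
$$1=H^1(C,F^\times/\{\pm 1\})\longrightarrow H^1(C,M)\longrightarrow H^1(C,\{\pm 1\})\stackrel{\delta}{\longrightarrow} H^2(C,F^\times/\{\pm 1\}),$$
so $H^1(C,M)=\Kernel(\delta)$ and it is enough to prove that $\delta$ does not kill the nontrivial class of $H^1(C,\{\pm 1\})\simeq\{\pm 1\}$.

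By the formula for the connecting map recalled right after \eqref{equation: cohom identifications}, $\delta$ sends the nontrivial class to $\norm{C}(\widetilde m)=\widetilde m\,\sigma(\widetilde m)$, where $\widetilde m\in M$ is any $\pi$-preimage of $-1$; and this is trivial in $H^2(C,F^\times/\{\pm 1\})=(F^\times/\{\pm 1\})^{C}/\norm{C}(F^\times/\{\pm 1\})$ exactly when $\norm{C}(\widetilde m)\in\norm{C}(F^\times/\{\pm 1\})$. Now observe that $\pi(\norm{C}(x))=\pi(x)^{2}=1$ for every $x\in M$, so $\norm{C}$ maps all of $M$ into $\Kernel(\pi)=F^\times/\{\pm 1\}$; in particular, if $x\in F^\times/\{\pm 1\}$ then $\norm{C}(x)\in\norm{C}(F^\times/\{\pm 1\})$. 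Consequently the element $x_0\in M$ from the hypothesis, which satisfies $\norm{C}(x_0)\notin\norm{C}(F^\times)$ — i.e. $\norm{C}(x_0)$ is not the class of any $\norm{C}(f)$ for $f\in F^\times$, equivalently $\norm{C}(x_0)\notin\norm{C}(F^\times/\{\pm 1\})$ — must lie outside $\Kernel(\pi)$, that is $\pi(x_0)=-1$. Choosing $\widetilde m=x_0$ then gives that $\delta$ of the nontrivial class is the class of $\norm{C}(x_0)$, which is nonzero by hypothesis. Hence $\delta$ is injective and $H^1(C,M)=1$.

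The argument is mostly formal once the exact sequences and the cyclic-group cohomology recipe of \eqref{equation: cohom identifications} are in place; the one step I expect to require real care is the observation that $\norm{C}$ automatically carries $M$ into the submodule $F^\times/\{\pm 1\}$, which is precisely what forces the element $x_0$ in the hypothesis to be a $\pi$-preimage of the \emph{nontrivial} class rather than an element of $F^\times/\{\pm 1\}$ itself — and then matching this with the explicit description of the connecting homomorphism $\delta$ and of $H^2$ of the cyclic group $C$. The hypothesis that $F$ is imaginary enters only through the positivity of the norm on $F$: in part (1) to force $\norm{C}(m)=1$, and, if one wishes to identify the subgroup $\norm{C}(F^\times/\{\pm 1\})$ with the full norm group $\norm{C}(F^\times)\subseteq\Q^\times$, in part (2) as well.
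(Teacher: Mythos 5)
Your proof is correct and follows essentially the same line as the paper: part (1) via Hilbert 90 and the fact that $-1$ is not a norm from an imaginary quadratic field, part (2) via the long exact sequence in $G/H$-cohomology for $1 \to F^\times/\{\pm 1\} \to (E^\times/\{\pm 1\})^H \to H^1(H,\{\pm 1\}) \to 1$, reducing to the injectivity of the connecting map $\delta$ and evaluating it explicitly. One point worth flagging: in part (2) the paper writes $\delta(-1) = \norm{G/H}(x)$ without spelling out why the hypothesized element $x$ must be a lift of the \emph{nontrivial} class of the quotient; your observation that $\norm{G/H}$ carries all of $(E^\times/\{\pm 1\})^H$ into $\Kernel(\pi) = F^\times/\{\pm 1\}$, so that the hypothesis $\norm{G/H}(x)\notin\norm{G/H}(F^\times)$ forces $\pi(x)=-1$, supplies exactly that missing justification. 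The case split on the parity of $|H|$, and the direct cocycle computation in part (1), are slightly more explicit than the paper but not a different method.
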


\begin{proof}

We first consider part (1). By Hilbert's 90th theorem, the sequence $1 \rightarrow \{\pm 1 \} \rightarrow F^\times \rightarrow F^\times/\{\pm 1\} \rightarrow 1$ induces a short exact sequence
	\begin{center}
		\begin{tikzcd}[column sep = tiny]
		 1\rightarrow H^1(G/H, F^\times/\{\pm 1\}) \arrow{r} & H^2(G/H,\{\pm 1\}) \arrow{r} & H^2(G/H,F^\times).
		\end{tikzcd}
	\end{center}
Hence, using the identification \eqref{equation: ses F mod pm1}, it is enough to show that the natural map
	$$\{\pm 1\}\simeq H^2(G/H,\{\pm 1\}) \rightarrow H^2(G/H,F^\times)\simeq (F^\times)^{G/H}/\norm{G/H} (F^\times)$$
	is injective. This follows from the fact that  $-1$ is not a norm for the imaginary field extension $F/\mathbb Q$.

To prove part (2), let us write $M$ to denote the quotient of $(E^\times/\{\pm 1\})^H$ by $F^\times/\{\pm 1\}$. The exact sequence from Lemma \ref{lem : injectivitycyclic}
	\begin{center}
		\begin{tikzcd}
			1 \arrow{r} & F^\times/ \{\pm 1\} \arrow{r} & (E^{\times}/\{\pm 1\})^{H} \arrow{r} & H^1(H,\{\pm 1\}) \arrow{r} & 1,
		\end{tikzcd}
	\end{center}
	shows that $M$ is isomorphic to $H^1(H,\{\pm 1\})\simeq \{\pm 1\}$, so that that, in particular, $\Kernel(\norm{G/H}:M\rightarrow M)=M$. The above exact sequence,
	together with part (1) of the lemma, induces an exact sequence
	$$		1 \rightarrow H^1(G/H,(E^{\times}/\{\pm 1\})^{H}) \rightarrow H^1(G/H, M) \stackrel{\delta}{\rightarrow} H^2(G/H,F^\times/\{\pm 1\}),
	$$
	and thus it suffices to show that $\delta$ is injective. On the one hand, observe that $H^1(G/H, M)\simeq \{\pm 1\}$. On the other hand, by \eqref{equation: ses F mod pm1}, if $\sigma$ is the nontrivial element of $G/H$, we have
	$$
	H^1(G/H, M)\simeq \Kernel(\norm{G/H})/ \langle \sigma(m)m^{-1}\rangle_{m\in M}=M/ \langle \sigma(m)m^{-1}\rangle_{m\in M}.
	$$
	Using the identifications from \eqref{equation: ses F mod pm1}, we may rewrite $\delta$ as
	$$\norm{G/H}:M/ \langle \sigma(m)m^{-1}\rangle_{m\in M}\simeq \{\pm 1\} \rightarrow  (F^\times)^{G/H}/\norm{G/H}(F^\times).
	$$
	From this, we see that $\delta(-1)=\norm{G/H}(x)$ and the injectivity of $\delta$ follows from the hypothesis that $\norm{G/H}(x)\not \in \norm{G/H}(F^\times)$.

\end{proof}
We now come back to the proof of Proposition \ref{prop : minimality}. Suppose first that $p\equiv 2 \pmod 3$. Recall that $G\simeq \Z/2\Z\times \Z/(p-1)\Z$. Let $G_1 \simeq \Z/(p-1)\Z$ be the cyclic subgroup of $G$ fixing $F_1:=\Q(\sqrt{-3})$. We claim that $H^1(G/G_1,(E^{\times}/\{\pm 1\})^{G_1})=1$.
 By \cite[Exercise 2.1]{Was97} one has that either $\sqrt{-p}\in E^{\times}$ or $\sqrt{-3p}\in E^{\times}$. Hence, by Lemma \ref{lemma: H1zero}, it is enough to show that either $p=\norm{G/G_1}(\sqrt{-p})$ or $3p=\norm{G/G_1}(\sqrt{-3p})$ is not a norm for the field extension $F_1/\Q$. Since $3$ is a norm, this amounts to showing that $p$ is not a norm. The latter follows from the condition $p\equiv 2 \pmod 3$, which is equivalent to the primality of the ideal $p\mathcal O_{F_1}$.

Suppose now that $p\equiv 3 \pmod 4$.
 In this case $\sqrt{-p}\in E^{\times}$ by \cite[Exercise 2.1]{Was97}, and the extension
 $F_2:=\Q(\sqrt{-p})\subseteq E$ is cyclic. Let $G_2$ be its cyclic Galois group.
 We claim that $H^1(G/G_i,(E^{\times}/\{\pm 1\})^{G_i})$ is trivial for either $i=1$ or $i=2$.
By Lemma \ref{lemma: H1zero}, it is enough to show that either $3=\norm{G/G_2}(\sqrt{-3})$ is not a norm in $F_2/\Q$  or that $p=\norm{G/G_1}(\sqrt{-p})$ is not a norm in $F_1/\Q$. This amounts to showing that either
$$
\left(\dfrac{-3}{p}\right)=-1 \qquad \text{or}\qquad \left(\dfrac{-p}{3}\right)=-1.
$$
This is implied by the fact that the product of the above two Legendre symbols is -1, as a consequence of quadratic reciprocity and the assumption $p\equiv 3 \pmod 4$.

\endproof

\section{Grunwald-Wang style counterexamples}
In this section we generalise the Grunwald-Wang counterexample from \cite{Fit24} recalled in Section \ref{sec: GWintro}.
Let $K$ be a field and $A$ be an abelian variety over $K$. Let $K \subseteq L$ be the minimal extension over which all the endomorphisms of $A$ are defined and let $G:=\Gal(L/K)$. Write $E$ for $\End(A_{L})\otimes \Q$.
\subsection{Preliminaries}
Let $B$ be an abelian variety over $K$ such that
$B_{L}$ is the quadratic twist of $A_{L}$ by a quadratic character $\chi$ of $\Gamma_{L}$.
Let $L\subseteq L_{\chi}$ be the quadratic extension cut by $\chi$. Since $K\subseteq L_\chi$ is the field over which all the endomorphisms of $A\times B$ are defined, it is a Galois extension of $K$ whose Galois group is a central extension
\begin{equation}\label{eq : centralextension}
	1\rightarrow \{\pm 1\}\rightarrow \Gal(L_{\chi}/K)\rightarrow  G\rightarrow 1.
\end{equation}
\begin{proposition}\label{proposition: transgressionstrong}
If $A$ and $B$ are quadratic twists then (\ref{eq : centralextension}) splits, so that
$$\Gal(L_{\chi}/K)\simeq G\times \{\pm 1\}.$$
\end{proposition}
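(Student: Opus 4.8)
The plan is to use the quadratic twist exhibiting $B$ from $A$ over $K$ to write down an explicit splitting of \eqref{eq : centralextension}: the splitting will come from the quadratic subfield of $L_\chi$ cut out by that twist.

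First, by hypothesis I may fix a quadratic character $\psi\colon\Gamma_K\to\{\pm1\}$ with $B\cong A_\psi$, and let $K\subseteq K_\psi$ be the (at most quadratic) extension it cuts out. Over the compositum $LK_\psi$ the character $\psi$ becomes trivial, so the twisting isomorphism provides an isomorphism $f\colon A_{LK_\psi}\xrightarrow{\ \sim\ }B_{LK_\psi}$. I would then show that $LK_\psi$ is a field of definition for \emph{all} of $\End\big((A\times B)_{\overline K}\big)$. Indeed, $\End(A_{LK_\psi})=\End(A_{\overline K})$ because $L$ is by definition the field of definition of $\End(A_{\overline K})$, and in the block decomposition
$$\End(A\times B)=\End A\ \oplus\ \Hom(B,A)\ \oplus\ \Hom(A,B)\ \oplus\ \End B$$
the four summands are, through $f$, respectively $\End A$, $\ \End A\circ f^{-1}$, $\ f\circ\End A$ and $\ f\circ\End A\circ f^{-1}$, each defined over $LK_\psi$ since $f$, $f^{-1}$ and $\End(A_{\overline K})$ are. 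As $L_\chi$ is, by construction, the \emph{minimal} field of definition of the endomorphisms of $A\times B$, this gives $L\subseteq L_\chi\subseteq LK_\psi$.

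Next, since $\chi$ is nontrivial we have $[L_\chi:L]=2$, whereas $[LK_\psi:L]\le 2$; hence the chain $L\subseteq L_\chi\subseteq LK_\psi$ forces $L_\chi=LK_\psi$ and $K_\psi\not\subseteq L$, so that $L\cap K_\psi=K$ (in particular $\psi$ is itself nontrivial). Now $L/K$ and $K_\psi/K$ are Galois with trivial intersection, so restriction yields an isomorphism
$$\Gal(L_\chi/K)=\Gal(LK_\psi/K)\ \xrightarrow{\ \sim\ }\ \Gal(L/K)\times\Gal(K_\psi/K)=G\times\{\pm1\},$$
under which the projection $\Gal(L_\chi/K)\to G$ is the first projection and the kernel $\Gal(L_\chi/L)=\{\pm1\}$ is $\{1\}\times\Gal(K_\psi/K)$. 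This identifies \eqref{eq : centralextension} with the manifestly split extension $1\to\Gal(K_\psi/K)\to G\times\Gal(K_\psi/K)\to G\to1$, which is the assertion.

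The only step that is not formal is the middle one: checking that no endomorphism of $A\times B$ becomes defined strictly between $L$ and $LK_\psi$. The point to be careful about is to exploit that $L$ is the field of definition of \emph{all} of $\End(A_{\overline K})$ — not merely a field over which $A$ acquires some particular endomorphism — after which the single isomorphism $f$ over $LK_\psi$ together with the four-block decomposition make the descent routine. I expect this to be the main (though rather mild) obstacle; everything else is standard Galois theory of composita.
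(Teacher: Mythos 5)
Your proof is correct, and it takes a genuinely different route from the paper's. The paper identifies the isomorphism class of the central extension \eqref{eq : centralextension} with the transgression class $\Tra(\chi)\in H^2(G,\{\pm 1\})$ and shows it vanishes by using the five-term inflation--restriction exact sequence together with the injectivity of $H^1(\Gamma_L,\{\pm 1\})^G\to H^1(\Gamma_L,E^\times)^G$, which forces $\chi=\Res(\tilde\chi)$ for a global quadratic character $\tilde\chi$ and hence $\Tra(\chi)=\Tra(\Res(\tilde\chi))=1$. You instead produce the splitting field by hand: taking the quadratic field $K_\psi$ cut out by the global twisting character $\psi$, you show that $LK_\psi$ is a field of definition for $\End\big((A\times B)_{\overline K}\big)$ via the four-block decomposition and the twisting isogeny $f$, deduce $L\subseteq L_\chi\subseteq LK_\psi$ from the minimality of $L_\chi$, and then a degree count gives $L_\chi=LK_\psi$ and $L\cap K_\psi=K$, whence $\Gal(L_\chi/K)\simeq G\times\{\pm 1\}$ as a compositum of linearly disjoint Galois extensions. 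Both arguments amount to showing that $\chi$ extends to a quadratic character of $\Gamma_K$ (your $\psi$, the paper's $\tilde\chi$); the paper packages this cohomologically and avoids tracking the block decomposition of $\End(A\times B)$ explicitly, while your version is more elementary and makes the splitting subgroup $\Gal(L_\chi/L)\simeq\Gal(K_\psi/K)$ visible. The only thing I would make explicit, for clarity, is why $[L_\chi:L]=2$: this is because the setup of the section assumes $\chi$ nontrivial (otherwise \eqref{eq : centralextension} is not a genuine extension by $\{\pm 1\}$), and your degree-count step relies on this.
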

\begin{proof}
Recall that $H^2(G, \{\pm 1\} )$ is in bijection with the set of isomorphisms classes of central extensions of the form
$1\rightarrow \{\pm 1\}\rightarrow H\rightarrow G\rightarrow 1$.

Let $\Tra: H^1(\Gamma_{L},\{\pm 1\} )\rightarrow H^2(G, \{\pm 1\} )$
be the transgression map induced by the inclusion of the normal subgroup $\Gamma_{L}\subseteq \Gamma_K$. Under the aforementioned bijection, the class of $\Tra(\chi)$ identifies with the Galois group of the Galois extension $L_{\chi}/K$.
Hence, it is enough to show that if $B$ is a quadratic twist of $A$, then $\Tra(\chi)=1$.

By \cite[Proposition 1.6.7]{NSW00}, we have a commutative diagram
\begin{center}
\begin{tikzcd}
H^1(\Gamma_K, \{\pm 1\} ) \arrow{r}{\Res}\arrow{d}{\iota} & H^1(\Gamma_{L},\{\pm 1\})^G \arrow{d}{\iota'}\arrow{r}{\Tra}& H^2(G, \{\pm 1\})\\
 H^1(\Gamma_K, E^ \times )\arrow{r}{\Res}&H^1(\Gamma_{L},E^{\times} )^G
	\end{tikzcd}
\end{center}
whose first row is exact. Observe that $\iota'$ is injective, since the action of $\Gamma_{L}$ on $E^ \times$ is trivial and then  $H^1(\Gamma_K, E^ \times )$ identifies with the quotient of $\Hom(\Gamma_K, E^ \times)$ modulo conjugation.

Let $c_{B}$ be the element in $H^1(\Gamma_K,E^{\times} )$ corresponding to $B$. Note that $\Res(c_{B})=\iota'(\chi)$.
If $A$ and $B$ are quadratic twists, then there exists a quadratic character $\tilde \chi$ of $\Gamma_K$ such that $\iota(\tilde \chi)=c_{B}$. But then
$$
\iota' (\Res(\tilde \chi))=\Res(\iota(\tilde \chi))=\Res(c_{B})=\iota'(\chi).
$$

The injectivity of $\iota'$ implies that $\chi=\Res(\tilde\chi)$ and hence $\Tra(\chi)=\Tra(\Res(\tilde \chi))=1$.
\end{proof}

\subsection{Grunwald-Wang counterexamples}\label{sec : GW}
Let $m$ be a positive integer. Assume that $E$ contains $\mathbb Q(\zeta_{2m})$. In the following, we let $\mu_{n}$ denote the group of all $n^{th}$ roots of unity. For $\alpha\in K^{\times}$, let $[\alpha]\in H^1(\Gamma_K, \mu_{2m})\simeq K^{\times}/(K^{\times})^{2m}$ the corresponding class. Define $A_{\alpha}$ as the abelian variety corresponding to the image of $[\alpha]$ through the map
$H^1(\Gamma_K,\mu_{2m})\rightarrow H^1(\Gamma_K,E^{\times})$.

\begin{example}\label{example: Kummer}
Let $A$ be the Jacobian of the curve given by the affine model $C:y^2=x^{m+1}+x$. The action of $\mu_{2m}$ on $C$ given by $(x,y)\mapsto (\zeta_m x, \zeta_{2m}y)$ yields an inclusion of $\mathbb Q(\zeta_{2m})$ in $E$. If $C_\alpha$ denotes the curve given by $y^2=x^{m+1}+\alpha x$, then
$$
\phi_\alpha: C\rightarrow C_\alpha\,,\qquad \phi_\alpha(x,y)=(\alpha^{1/m}x, \alpha^{(m+1)/(2m)}y)
$$
is an isomorphism over $\overline K$. The map $\xi_\alpha:\Gamma_K\rightarrow \mu_{2m}$ defined as $\xi_\alpha(s):=\phi_\alpha^{-1}\circ {}^s\phi_\alpha$ is a 1-cocycle. An easy calculation shows that the class of $\xi_\alpha$ corresponds to $[\alpha]$ under the Kummer isomorphism. Hence $A_\alpha$ is the Jacobian of the curve $C_\alpha$.
\end{example}

\begin{proposition}
Assume that $L=K(\zeta_{2m})$, that the only roots of unity contained in $K$ are $\pm 1$, and that there exists a finite subset $S\subseteq \Sigma_K$ such that $\alpha \in K_v^{\times,m}$ for all $v\not \in S$.
	\begin{enumerate}
		\item The abelian varieties $A$ and $A_{\alpha}$ are strongly locally quadratic twists. More precisely, $A$ and $A_\alpha$ are quadratic twists over $K_v$ for all $v\not \in S$.
		\item If $A$ and $A_{\alpha}$ are quadratic twists then either $\alpha$ or $-\alpha$ is an $m^{th}$ power.
	\end{enumerate}
\end{proposition}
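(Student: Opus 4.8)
The plan is to prove the two parts by rather different arguments. Part~(1) is a purely local Kummer-theoretic computation; part~(2) will combine the transgression statement of Proposition~\ref{proposition: transgressionstrong} with the classical criterion for when a radical extension is abelian.

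For part~(1), the key observation is that the inclusion $\{\pm1\}=\mu_2\hookrightarrow\mu_{2m}$ induces, under the Kummer identifications $H^1(\Gamma_{K_v},\mu_n)\simeq K_v^\times/(K_v^\times)^n$, the map $K_v^\times/(K_v^\times)^2\to K_v^\times/(K_v^\times)^{2m}$ sending the class of $d$ to the class of $d^m$ (one checks this on the explicit Kummer cocycle, using $(d^m)^{1/(2m)}=d^{1/2}$). Fix $v\notin S$ and write $\alpha=\beta^m$ with $\beta\in K_v^\times$, which is possible by hypothesis. Then the restriction of $[\alpha]$ to $H^1(\Gamma_{K_v},\mu_{2m})$ equals the image of the class of $\beta$ in $H^1(\Gamma_{K_v},\{\pm1\})$, so its further image in $H^1(\Gamma_{K_v},E^\times)$ lies in the image of $H^1(\Gamma_{K_v},\{\pm1\})$; that is, over $K_v$ the abelian variety $A_\alpha$ is the quadratic twist of $A$ by the character cut out by $\beta$. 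Since $S$ is finite, this proves both assertions of~(1).

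For part~(2), suppose $A$ and $A_\alpha$ are quadratic twists over $K$; a fortiori they are quadratic twists over $L$. Restricting the classes involved to $\Gamma_L$, and using that $\Gamma_L$ acts trivially on $E^\times$ (so $H^1(\Gamma_L,-)=\Hom(\Gamma_L,-)$ and $\Hom(\Gamma_L,\mu_{2m})\hookrightarrow\Hom(\Gamma_L,E^\times)$ is injective; here one uses that $\{\pm1\}$ is central in $E^\times$, so no conjugation ambiguity arises), one finds that the restriction of $[\alpha]$ to $H^1(\Gamma_L,\mu_{2m})=L^\times/(L^\times)^{2m}$ lies in the image of $L^\times/(L^\times)^2$; equivalently $\alpha\in(L^\times)^m$, and $(A_\alpha)_L$ is the quadratic twist of $A_L$ by the Kummer character $\chi$ of an $m$-th root of $\alpha$ in $L$, so that $L_\chi=L(\alpha^{1/(2m)})$ (with $L_\chi=L$ when $\chi$ is trivial). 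Now Proposition~\ref{proposition: transgressionstrong} applies and gives $\Gal(L_\chi/K)\simeq G\times\{\pm1\}$, which is abelian because $G=\Gal(K(\zeta_{2m})/K)$ is; hence the subextension $K(\alpha^{1/(2m)})/K$ is abelian as well. By the classical criterion for a radical extension $K(\alpha^{1/n})/K$ to be abelian (due to Schinzel), applied with $n=2m$, one gets $\alpha^{w}\in(K^\times)^{2m}$ where $w=\#\mu_{2m}(K)$; since the only roots of unity in $K$ are $\pm1$ we have $w=2$, so $\alpha^2\in(K^\times)^{2m}$ and therefore $\alpha=\pm k^m$ for some $k\in K^\times$.

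The step I expect to be the main obstacle is this last descent in part~(2): passing from ``$\alpha$ becomes an $m$-th power over $K(\zeta_{2m})$'' to ``$\alpha$ or $-\alpha$ is already an $m$-th power over $K$''. This is precisely where the Grunwald--Wang phenomenon, and the unavoidable sign, enter, and it is why one must invoke the (classical but genuinely nontrivial) criterion for abelianness of radical extensions rather than a naive Galois descent. The remaining ingredients — setting up the Kummer identifications compatibly (in particular that $\mu_2\hookrightarrow\mu_{2m}$ becomes $d\mapsto d^m$), identifying $\chi$ and $L_\chi$ with the radical extension $L(\alpha^{1/(2m)})$, and treating the degenerate case $\chi=1$ by hand — are routine.
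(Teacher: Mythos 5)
Your proof is correct, and part (1) is essentially the paper's argument (both identify the map $H^1(\Gamma_{K_v},\mu_2)\to H^1(\Gamma_{K_v},\mu_{2m})$ with $d\mapsto d^m$ under Kummer theory; the paper reaches the same conclusion via the three-term exact sequence $\mu_2\to\mu_{2m}\to\mu_m$ rather than by exhibiting $\beta$ directly). In part (2) you follow the same overall strategy -- Proposition~\ref{proposition: transgressionstrong} followed by Schinzel's criterion -- but you reach the intermediate fact $\alpha\in(L^\times)^m$ by a genuinely different route. The paper invokes the Grunwald--Wang theorem at this step, exploiting $\zeta_{2m}\in L$ together with the hypothesis that $\alpha$ is locally an $m$-th power; you instead restrict the given equality of classes in $H^1(\Gamma_K,E^\times)$ to $\Gamma_L$, use that $\Hom(\Gamma_L,\mu_{2m})\hookrightarrow\Hom(\Gamma_L,E^\times)$ is injective (since $\mu_{2m}\hookrightarrow E^\times$ is), and read off $\alpha\equiv\beta^m\pmod{(L^\times)^{2m}}$ from the Kummer identifications. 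This is a clean simplification: your part (2) needs only the global quadratic-twist hypothesis and never touches Grunwald--Wang or the local $m$-th-power hypothesis. One small remark: Schinzel's criterion (Theorem 2 of \cite{Sch77}) is formulated for the abelianness of $K(\zeta_n,\alpha^{1/n})/K$, and the paper applies it directly to $L_\chi=K(\zeta_{2m},\alpha^{1/(2m)})$; your detour through the subextension $K(\alpha^{1/(2m)})/K$ is harmless (a compositum of abelian extensions is abelian, so the two conditions are equivalent) but unnecessary.
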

\begin{proof}
Consider the exact sequence
$$1\rightarrow \mu_2\rightarrow \mu_{2m}\xrightarrow{(-)^2} \mu_m\rightarrow 1$$
where the first arrow is the natural inclusion. For any field extension $K\subseteq F$, one has an induced short exact sequence in cohomology
$$H^1(\Gamma_F, \mu_2)\rightarrow H^1(\Gamma_F, \mu_{2m})\rightarrow H^1(\Gamma_F, \mu_m),$$
which, in turn, identifies with the exact sequence
$$F^{\times}/(F^{\times})^2\xrightarrow{(-)^m} F^{\times}/(F^{\times})^{2m}\rightarrow F^{\times}/(F^{\times})^{m},$$
where the last arrow is the natural projection.
This shows that $[\alpha]_F\in H^1(\Gamma_F, \mu_{2m})$ is in the image of $H^1(\Gamma_F, \mu_2)$ as soon as $\alpha$ is an $m^{th}$-power in $F$, since this implies that it becomes trivial in $H^1(\Gamma_F, \mu_m)$. Taking $F$ to be $K_v$ for any $v\not \in S$, and considering the commutative diagram
\begin{center}
\begin{tikzcd}
H^1(\Gamma_F, \mu_2)\arrow{rr}\arrow{rd} && H^1(\Gamma_F, \mu_{2m})\arrow{ld}\\
&H^1(\Gamma_F, E^{\times}),
	\end{tikzcd}
\end{center}
 we obtain $(1)$.

 We are left to prove that if $A$ and $A_{\alpha}$ are quadratic twists then either $\alpha$ or $-\alpha$ is an $m^{th}$ power.
By the Grunwald-Wang theorem \cite[p. 96]{AT68}, since $\zeta_{2m}\in L$ and $\alpha$ is locally an $m^{th}$ power, it becomes an $m^{th}$ power in $L$, so that $L(\sqrt[2m]\alpha)/L$ is of degree two and $A_L$ and $A_{\alpha,L}$ are quadratic twists by the character defining the extension $L\subseteq L(\sqrt[2m]\alpha)$.

By Proposition \ref{proposition: transgressionstrong}, we see that
$$
\Gal(K(\zeta_{2m},\sqrt[2m]\alpha)/K)=\Gal(L(\sqrt[2m]\alpha)/K)\simeq \Gal(L/K)\times \{\pm 1\}
$$
is abelian. Since the only roots of unity contained in $K$ are $\pm 1$, by \cite[Theorem 2]{Sch77} we deduce that $\alpha^2$ is a $2m^{th}$ power, hence that one of $\alpha$ or $-\alpha$ is $m^{th}$ power. This concludes the proof.
\end{proof}
\begin{example}\label{ex : all places}
	In Example \ref{example: Kummer} , take $K=\mathbb Q(\sqrt 7)$, $m=8$, and $\alpha=16$. One has that $L=K(\zeta_{16})$. Then $A$ and $A_\alpha$ are everywhere strongly locally quadratic, but not globally quadratic, since neither $16$ or $-16$ are $8^{th}$-powers in $K$ but $16$ is an $8^{th}$-power in every localization of $K$ (see \cite[p. 98]{AT68}). It would be interesting to determine for which other values of $m$ one has $L=K(\zeta_{2m})$.
\end{example}

\end{document}